\newtheorem{lemm}{Lemma}[section]
\newtheorem{prop}[lemm]{Proposition}
\newtheorem{defi}[lemm]{Definition}
\newtheorem{coro}[lemm]{Corollary}
\newtheorem{rema}[lemm]{Remark}
\newtheorem{exem}[lemm]{Example}
\newtheorem{ques}[lemm]{Question}
\newtheorem{theo}{Theorem}
\newcommand{\proofend}{\hfill $\square$}
\newcommand{\PGL}{\mathrm{PGL}}
\newcommand{\C}{\mathbb{C}}
\newcommand{\Bir}{\mathrm{Bir}}
\newcommand{\h}[1]{\hspace{-#1mm}}
\newcommand{\Aut}{\mathrm{Aut}}
\newcommand{\rkPic}[1]{\mathrm{rk\ Pic}(#1)}
\newcommand{\Pn}{\mathbb{P}^2}
\newcommand{\Pic}[1]{\mathrm{Pic}(#1)}
\newcommand{\p}{\mathbb{P}}
\title{Dynamical degrees of (pseudo)-automorphisms fixing cubic hypersurfaces}
\thanks{Supported by the SNSF grant no PP00P2\_128422 /1}
\author{J\'er\'emy Blanc}
\email{Jeremy.Blanc@unibas.ch}
\begin{document}
\begin{abstract}
We give a way to construct group of pseudo-automorphisms of rational varieties of any dimension that fix pointwise the image of a cubic hypersurface of $\mathbb{P}^n$. These group are free products of involutions, and most of their elements have dynamical degree $>1$. Moreover, the Picard group of the varieties obtained is not big, if the dimension is at least $3$.

We also answer a question of E. Bedford on the existence of birational maps of the plane that cannot be lifted to automorphisms of dynamical degree $>1$, even if we compose them with an automorphism of the plane.
\end{abstract}

\subjclass{37F10, 32H50, 14J50, 14E07}

\maketitle
\section{Introduction}
A birational map $\varphi\colon \p^n\dasharrow \p^n$ (or a Cremona transformation) is a rational map given by 
$$(x_0:\dots:x_n)\dasharrow (P_0(x_0,\dots,x_n):\dots:P_n(x_0,\dots,x_n)),$$
where all $P_i$ are homogeneous polynomials of the same degree, which admits an inverse of the same type. Choosing all $P_i$ without common component, the degree $\deg(\varphi)$ of $\varphi$ is by definition the degree of the polynomials $P_i$, or equivalently the degree of the pull-back of hyperplanes of $\p^n$ by $\varphi$.

The \emph{$($first$)$ dynamical degree} of $\varphi$ is the number
$$\lim_{n\to \infty} (\deg(\varphi^n))^{1/n},$$
which always exists, since $\deg (\varphi^{a+b})\le \deg(\varphi^a)\cdot \deg(\varphi^b)$ for any $a,b\ge 0$. It is moreover invariant under conjugation. 

There is a sequence of articles which provide families of examples of birational maps of $\p^2$ with dynamical degree $>1$, lifting to automorphisms of a smooth rational surface obtained by blowing-up a finite number of points (among them, see \cite{bib:BedKim06}, \cite{bib:BedDil06}, \cite{bibi:McMullen}, \cite{bib:BedKim09}, \cite{bib:BedKim10}, \cite{bib:Diller}, \cite{bib:DeGr}....) The general way of producing examples is to start by a simple birational map (quadratic involution, automorphism of the affine plane,...), and to compose it with a linear automorphism of $\p^2$ to impose that the base-points of the inverse "come back" to the base-points of the map after a certain number of iteration of the map.

This approach was generalised in dimension $3$, in \cite{bib:BedKimDim3}, to provide pseudo-automorphisms of projective $3$-folds of dynamical degree $>1$, starting from a special family of quadratics elements of $\Bir(\p^3)$. Recall that a pseudo-automorphism of $X$ is a birational self-map $\varphi\in\Bir(X)$ such that $\varphi$ and $\varphi^{-1}$ do not contract any codimension $1$ set; it is the same as automorphism for smooth projective surfaces.

Other examples of pseudo-automorphisms of dynamical degree $>1$ of rational projective varieties of any dimension were given in \cite{bib:PerZha}, using actions of Weyl groups on blow-ups of $\p^n$ at a finite number of points.

In this article, we give another way of constructing examples, which also works in any dimension. This produces large groups of pseudo-automorphisms, where almost all elements have dynamical degree $>1$. Moreover, the rank of the Picard group of the varieties obtained does not need to be very large, contrary to what happens in dimension $2$, or to the examples of \cite{bib:BedKimDim3} and \cite{bib:PerZha}.\\

We recall the following construction, defined in \cite[page~42, Example~3]{bib:Giz} over the name of $R_p$.
\begin{defi} \label{Def:InvPn} 
Let $Q\subset \p^n$ be a cubic hypersurface, and let $p\in Q$ be a smooth point. We define an involution $\sigma_{p,Q}\in \Bir(\p^n)$ which fixes pointwise $Q$ by the following: if $L$ is a general line of $\p^n$ passing through $p$, we have $\sigma_{p,Q}(L)=L$ and the restriction of $\sigma_{p,Q}$ to $L$ is the involution that fixes $(L\cap Q)\backslash\{p\}$.
\end{defi}
From the geometric definition, we can easily get an algebraic definition by polynomials (see \cite{bib:Giz} or Section~\ref{Sec:SigmaP}). We will show that any $\sigma_{p,Q}$ lifts to an automorphism of a smooth variety obtained by blowing-up codimension $2$ subsets of $\p^n$. Taking a finite number of points on the same cubic gives a huge group of pseudo-automorphisms of a rational $n$-fold:

\begin{theo}\label{Thm:CubicsPseudo}
Let $Q\subset \p^n$ be a cubic hypersurface, and let $p_1,\dots,p_k\in Q$ be distinct smooth points. 
For $i=1,\dots,k$, we write $\Gamma_i=\{x\in Q \ |\ $the line through $x$ and $q_i$ is tangent to $Q$ at $x\}$.

Denote by $\pi\colon X\to \p^n$ the following birational morphism: it first blows-up  all points $p_1,\dots,p_k$, then blows-up the strict transform of $\Gamma_{1}$, then the strict transform of $\Gamma_{2}$ and so on until blowing-up the strict transform of $\Gamma_{k}$. 

If $p_i\notin \Gamma_{j}$ for any $i\not= j$, then $\sigma_{p_1,Q},\dots,\sigma_{p_k,Q}$ lift to pseudo-automorphisms $\hat{\sigma}_1,\dots,\hat{\sigma}_k$ of $X$, that generate a free product  $$G=\star_{i=1}^k <\hat{\sigma}_{i}>,$$ having the following properties:

\begin{enumerate}
\item
Any element of $G$ of finite order is conjugate to $\hat{\sigma}_{i}$ for some $i$ $($and has dynamical degree~$1)$;
\item
Any element conjugate to $(\hat{\sigma}_{i}\hat{\sigma}_{j})^m$ for $i\not=j$ and $m\ge 1$ is of infinite order, and its dynamical degree is equal to $1$ if and only if $d=3$;
\item
Any other element has dynamical degree $>1$.
\item
Each element of $G$ fixes pointwise the lift of the cubic $Q$ on $X$.
\end{enumerate}

\end{theo}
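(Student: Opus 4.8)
The plan is to move everything onto the lattice $N^1(X)=\Pic{X}\otimes\mathbb{R}$ and then argue with the combinatorics of free products of involutions. I would first analyse a single $\sigma_{p_i,Q}$ geometrically: using the explicit polynomial description of $\sigma_{p,Q}$ (Section~\ref{Sec:SigmaP}), one checks that its base locus is exactly the point $p_i$ together with the contact locus $\Gamma_i$, and that after blowing up $p_i$ and then the strict transform of $\Gamma_i$ the induced map is an isomorphism in codimension $1$, permuting the prime divisors without contracting anything. To pass to the common variety $X$, note that $\sigma_{p_i,Q}$ fixes $Q$ pointwise, hence fixes every $p_j$ and every $\Gamma_j$ with $j\neq i$; the hypothesis $p_i\notin\Gamma_j$ (equivalently $p_j\notin\Gamma_i$) guarantees that none of the $p_j$, nor the successive strict transforms of the $\Gamma_j$, meet the base locus $\{p_i\}\cup\Gamma_i$ of $\sigma_{p_i,Q}$. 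Thus $\sigma_{p_i,Q}$ is a local isomorphism along each remaining blow-up centre and the lift $\hat\sigma_i$ is a genuine pseudo-automorphism of $X$ permuting the exceptional divisors. The same observation gives (4) at once: each $\hat\sigma_i$ fixes the strict transform $\tilde{Q}$ of $Q$ pointwise, hence so does every word in the $\hat\sigma_i$.

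Next I would compute the involution $M_i:=(\hat\sigma_i)^{*}\in\GL(N^1(X))$ in the basis formed by the hyperplane class and the exceptional classes. Since $\hat\sigma_i$ is an involution fixing a hyperplane's worth of classes and reversing one more direction, $M_i$ is a reflection $M_i(x)=x-2\langle x,\alpha_i\rangle\langle\alpha_i,\alpha_i\rangle^{-1}\alpha_i$ with respect to a symmetric bilinear form $\langle\,,\,\rangle$ on $N^1(X)$ preserved by the whole group (and readable off from the intersection data of the exceptional divisors). The single quantity that controls everything is the normalised Gram entry $c_{ij}=\langle\alpha_i,\alpha_j\rangle/\sqrt{\langle\alpha_i,\alpha_i\rangle\langle\alpha_j,\alpha_j\rangle}$ for $i\ne j$, and the computation should give $c_{ij}\le -1$ in all cases, with $c_{ij}=-1$ exactly when $d=3$. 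Granting this, $M_1,\dots,M_k$ are the standard reflections of the rank-$k$ universal Coxeter group, realised in a quadratic space whose relevant subspace is of Lorentzian type, with pairwise non-crossing walls, meeting only at infinity precisely when $d=3$.

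The remaining assertions then follow formally. A ping-pong (Klein combination) argument on the cone of effective classes, equivalently on the associated hyperbolic space, shows that $M_1,\dots,M_k$ generate $\star_{i=1}^{k}\langle M_i\rangle$; in particular $\hat\sigma_1,\dots,\hat\sigma_k$ generate $G=\star_{i=1}^{k}\langle\hat\sigma_i\rangle$. Since every element of $G$ is a pseudo-automorphism of $X$ and therefore contracts no divisor, the dynamical degree of a word $w$ equals the spectral radius of the corresponding product of the $M_i$, and it is a conjugacy invariant. For (1): in a free product of groups of order $2$ every element of finite order is conjugate to one of the generators, so a finite-order element of $G$ is conjugate to some $\hat\sigma_i$, whose matrix satisfies $M_i^{2}=\mathrm{id}$, hence has dynamical degree $1$. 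For (2): an element conjugate to $(\hat\sigma_i\hat\sigma_j)^{m}$ has dynamical degree $\rho(M_iM_j)^{m}$; on $\mathrm{span}(\alpha_i,\alpha_j)$ the product $M_iM_j$ has determinant $1$ and trace $4c_{ij}^{2}-2$, while on the orthogonal complement it has finite order, so $\rho(M_iM_j)=1$ exactly when $c_{ij}^{2}=1$, i.e.\ when $d=3$, and $\rho(M_iM_j)>1$ otherwise; infinitude of the order is the free-product property. For (3): any remaining element is conjugate to a cyclically reduced word of length $\ge 3$ in the $\hat\sigma_i$ (or to a power of such a word), and the ping-pong dynamics produces an attracting/repelling pair of fixed points on the boundary of the hyperbolic space, so $w$ acts as a hyperbolic isometry and its dynamical degree is $>1$.

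The heart of the argument is the combination of the geometric step --- verifying that blowing up $p_i$ and the strict transform of $\Gamma_i$ resolves $\sigma_{p_i,Q}$ to a pseudo-automorphism of the \emph{common} $X$, which is exactly where the hypothesis $p_i\notin\Gamma_j$ is used --- together with the one numerical computation, namely that the Gram entries satisfy $c_{ij}\le -1$ with $d=3$ the unique equality case. Once these are established, the free product structure and the trichotomy of dynamical degrees are a routine application of the geometry of reflection groups acting on hyperbolic space.
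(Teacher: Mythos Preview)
Your outline is largely sound and takes a genuinely different route from the paper. The paper does not invoke Coxeter groups or a preserved bilinear form at all: after establishing the lifting (Propositions~\ref{Prop:Lifts} and~\ref{Prop:Lift}) it introduces the classes $\nu_i=2H-2E_i-F_i$, records the rules $\hat\sigma_i(H)=H+\nu_i$, $\hat\sigma_i(\nu_i)=-\nu_i$, $\hat\sigma_i(\nu_j)=\nu_j+2\nu_i$, and then proves by a direct induction on the word length that $\varphi(H)=H+\sum\alpha_i\nu_i$ with $\sum\alpha_i\ge(5/3)^{t}$, where $t$ counts the indices $i\ge3$ with $a_i\ne a_{i-2}$. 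This elementary estimate simultaneously yields the free-product structure and the trichotomy on dynamical degrees; the parabolic case $\hat\sigma_i\hat\sigma_j$ is handled by a $3\times3$ matrix computation. Your reflection-group reformulation is more conceptual and explains \emph{why} the answer looks the way it does, while the paper's argument is self-contained and avoids any appeal to the theory of isometries of hyperbolic space.

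Two imprecisions in your sketch are worth flagging. First, in the geometric step you assert that the strict transforms of the $\Gamma_j$ do not meet the base locus of $\sigma_{p_i,Q}$; in fact $\Gamma_i\cap\Gamma_j\ne\varnothing$ in general, and this is exactly why the lifts are only pseudo-automorphisms rather than automorphisms of $X$. The paper deals with this by first lifting $\sigma_i$ to an honest automorphism of the variety $X_i$ obtained by blowing up $\Gamma_i$ \emph{last}, and then observing that the reordering maps $X_j\dasharrow X_i$ are isomorphisms in codimension~$1$. Second, for $n\ge3$ there is no intersection form on $N^1(X)$, so the symmetric bilinear form you want is not ``readable off from the intersection data''; it has to be manufactured from the relations $\hat\sigma_i(\nu_j)=\nu_j+2\nu_i$ themselves. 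It does exist (one finds $\langle\nu_i,\nu_j\rangle=-\langle\nu_i,\nu_i\rangle$, so $c_{ij}^2=1$), but you should say so explicitly rather than appeal to geometry.

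Finally, note that your case~(3) is precisely the borderline situation $|c_{ij}|=1$ where the reflecting walls are tangent at infinity; the assertion that every cyclically reduced word involving at least three generators acts hyperbolically is true but is the substantive part of the argument, not a ``routine application''. If you want to keep the Coxeter viewpoint, you should either cite a precise statement to that effect or supply a short proof---for instance by showing that the translation length on the Tits cone is positive, or by reproducing the paper's growth estimate in your language.
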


\begin{coro}\label{Coro1}
For any $n\ge 3$, there exist a rational smooth $n$-fold $X$ with $\rkPic{X}=7$ admitting a group of pseudo-automorphisms $G$ isomorphic to the free group with $2$ generators, such that all elements of $G\backslash \{1\}$ have dynamical degree $>1$. Moreover, $G$ fixes pointwise an hypersurface isomorphic to a general smooth cubic of $\p^n$.
\end{coro}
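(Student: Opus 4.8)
The plan is to apply Theorem~\ref{Thm:CubicsPseudo} with $k=3$ and then extract a suitable free subgroup. Fix $n\ge 3$; let $Q\subset\p^n$ be a general smooth cubic and $p_1,p_2,p_3\in Q$ general distinct points. For a general such choice the hypotheses of Theorem~\ref{Thm:CubicsPseudo} hold: $Q$ is smooth, the $p_i$ are distinct smooth points, and $p_i\notin\Gamma_j$ whenever $i\ne j$, since each $\Gamma_j$ is a proper closed subset of $Q$ and a general point of $Q$ avoids it. The morphism $\pi\colon X\to\p^n$ is then the blow-up of $p_1,p_2,p_3$ followed by the blow-ups of the three subvarieties $\Gamma_1,\Gamma_2,\Gamma_3$, which are irreducible for a general choice; hence $X$ is a smooth rational $n$-fold with $\rkPic{X}=1+3+3=7$, carrying the group
$$G=\langle\hat\sigma_1\rangle\star\langle\hat\sigma_2\rangle\star\langle\hat\sigma_3\rangle\cong\Z2\star\Z2\star\Z2$$
of pseudo-automorphisms, which by part~(4) fixes pointwise the strict transform of $Q$, a hypersurface isomorphic to a general smooth cubic of $\p^n$.

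By parts~(1)--(3) of Theorem~\ref{Thm:CubicsPseudo} (with $d=3$) the nontrivial elements of $G$ of dynamical degree $1$ are exactly the conjugates of the $\hat\sigma_i$ and the conjugates of the $(\hat\sigma_i\hat\sigma_j)^m$ with $i\ne j$, $m\ge 1$. So it suffices to exhibit $H\le G$ with $H\cong F_2$ containing none of these, i.e. with $H\cap\big(g\langle\hat\sigma_i,\hat\sigma_j\rangle g^{-1}\big)=\{1\}$ for all $g\in G$ and all $i\ne j$. I would argue on the Bass--Serre tree $T$ of the free product, viewed as the $3$-regular tree with the $G$-invariant proper $3$-edge-colouring in which $\hat\sigma_i$ inverts the $i$-coloured edge through the base vertex. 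The finite-order elements of $G$ are the elliptic isometries, and an infinite-order element is conjugate to a power of some $\hat\sigma_i\hat\sigma_j$ precisely when it is hyperbolic with \emph{bicoloured} axis (only two colours occur along it). Now choose two hyperbolic $a,b\in G$ whose axes use all three colours and are disjoint in $T$, with their four endpoints in $\partial T$ pairwise distinct --- for instance $a=\hat\sigma_1\hat\sigma_2\hat\sigma_3$, whose axis cycles through the three colours, and $b=gag^{-1}$ for $g\in G$ chosen to move the axis of $a$ off itself (as $g$ preserves the colouring, the axis of $b$ is again three-coloured). By the ping-pong lemma, for $N\gg 0$ the group $H:=\langle a^N,b^N\rangle$ is free of rank $2$, and for such $N$ the axis of any nontrivial element of $H$ contains a translate of a fundamental domain of the axis of $a$ or of $b$ --- three consecutive edges of the three distinct colours --- so it is not bicoloured. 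Hence $H$ avoids all the conjugacy classes above.

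Assembling: $H\cong F_2$ acts on $X$ by pseudo-automorphisms, every element of $H\setminus\{1\}$ has dynamical degree $>1$ by Theorem~\ref{Thm:CubicsPseudo}, and every element of $H$ fixes the copy of $Q$ pointwise by part~(4); this is the corollary. The main obstacle is the construction of $H$: one must discard not only torsion (automatic in a free group) but \emph{every} conjugate of \emph{every} $(\hat\sigma_i\hat\sigma_j)^m$, and since any finite-index subgroup of $G$ already contains some $(\hat\sigma_i\hat\sigma_j)^m$, the group $H$ must have infinite index --- so a genuine ping-pong/tree argument with sufficiently large powers is unavoidable. (If instead one is in a situation where the $(\hat\sigma_i\hat\sigma_j)^m$ have dynamical degree $>1$, the argument collapses: the index-two torsion-free subgroup $\langle\hat\sigma_1\hat\sigma_2,\hat\sigma_1\hat\sigma_3\rangle$ of $G$ is already free of rank $2$ with all nontrivial elements of dynamical degree $>1$.)
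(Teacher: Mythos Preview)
Your proof is correct and reaches the same conclusion as the paper, but by a different route once the free product $\langle\hat\sigma_1\rangle\star\langle\hat\sigma_2\rangle\star\langle\hat\sigma_3\rangle$ has been set up. The paper simply writes down two explicit elements, $\alpha=\hat\sigma_1\hat\sigma_2\hat\sigma_3$ and $\beta=\hat\sigma_2\hat\sigma_1\hat\sigma_2\hat\sigma_3\hat\sigma_2=\hat\sigma_2\alpha\hat\sigma_2$, and asserts that $\langle\alpha,\beta\rangle$ is free of rank~$2$ with no nontrivial element conjugate to anything of length $<3$. Your approach is more geometric: you work on the $3$-coloured Cayley tree, translate ``dynamical degree $=1$'' into ``bicoloured axis'', and then produce a Schottky subgroup $\langle a^N,b^N\rangle$ all of whose axes are tricoloured. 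Your criterion is in fact the right one, and is sharper than the paper's stated reason: merely excluding conjugates of length $<3$ does not by itself rule out conjugates of $(\hat\sigma_i\hat\sigma_j)^m$ for $m\ge 2$, whereas your tricoloured-axis condition does. On the other hand, the paper's choice is more economical: taking $g=\hat\sigma_2$ and $N=1$ already gives generators $\alpha,\beta$ for which one checks directly (by looking at the possible cancellations at the junctions of $\alpha^{\pm1},\beta^{\pm1}$) that every nontrivial word, after cyclic reduction in the $\hat\sigma_i$, still contains the block $\hat\sigma_1\hat\sigma_2\hat\sigma_3$ or its inverse and hence uses all three colours --- so the passage to large $N$ and the appeal to general Schottky geometry are not needed. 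The one point in your argument that deserves a line of justification is the assertion that, for $N\gg0$, the axis of every nontrivial element of $\langle a^N,b^N\rangle$ contains a full period of the axis of $a$ or of $b$; this is true and standard for Schottky groups acting on trees, but you state it without proof.
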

\begin{rema}Any pseudo-automorphism of a smooth projective surface $X$ with $\rkPic{X}\le 10$ has dynamical degree $1$. All previously known examples of smooth rational varieties admitting pseudo-automorphisms with dynamical degree $>1$ had Picard rank bigger than $10$.\end{rema}

Section~\ref{Sec:SigmaP} is devoted to the proof of Theorem~\ref{Thm:CubicsPseudo} and of its corollary. 

\begin{ques}
What is the minimal rank of a smooth projective rational $3$-fold admitting an automorphism of dynamical degree $>1$?
\end{ques}

\bigskip

Restricting to dimension $n=2$, Theorem~\ref{Thm:CubicsPseudo} gives the existence of group of automorphisms of rational surfaces with many elements of dynamical degree $>1$. The rank of the Picard group is however quite large, at least $16$. This is because the varieties $\Gamma_i$ are in fact union of $4$ distinct points. 

As we said above, the usual way to construct automorphisms of projective rational surfaces with dynamical degree $>1$ is to take a birational map of small degree, and then to compose it with an automorphism so that all base-points of the inverse  are sent after some iterations onto base-points of the map. This approach gives rise to the following question of Eric Bedford, also stated and studied by Julie D\'eserti and Julien Grivaux in \cite{bib:DeGr}:
\begin{ques}\label{QuesBed}
Does there exist a birational map of the projective plane $\varphi$ of degree $>1$ such that for all $\tau\in \Aut(\p^2)$ the map $\tau\varphi$ is not birationally conjugate to an automorphism of dynamical degree $>1$?
\end{ques}
Here, by "conjugate to an automorphism", we mean the existence of a birational map $\nu\colon \p^2\dasharrow X$, where $X$ is a projective smooth surface, such that $\nu (\tau\varphi)\nu^{-1}\in \Aut(X)$. 

It is know that if $\varphi$ is a birational map of degree $2$, there exists an automorphism $\tau$ such that $\tau\varphi$ is conjugate to an automorphism of dynamical degree $>1$. We recall this fact in Section~\ref{Sec:Dim2}. The same was also proved by different authors for some special maps $\varphi$ of degree $3$.
Using the involutions $\sigma_{p,Q}$, we prove in Section~\ref{Sec:Dim2} that the same holds for a \emph{general} map of degree $3$. 

The possible map $\varphi$ of Question~\ref{QuesBed} cannot thus be a general cubic transformation, or a transformation of degree $2$. Section~\ref{Sec:Example6} is devoted to the proof of the following result, showing that the map can be of degree $6$, and answering the question of Bedford, D\'eserti and Grivaux:
\begin{theo}\label{Thm:Exa6}
Let $\chi\colon\p^2\dasharrow \p^2$ be the birational map given by
$$\chi\colon (x:y:z)\dasharrow (xz^5+(yz^2+x^3)^2:yz^5+x^3z^3:z^6).$$

 For any automorphism $\tau\in \Aut(\p^2)$, the birational map $\tau\chi\in \Bir(\p^2)$ is not conjugate to an automorphism of a smooth projective rational surface.
\end{theo}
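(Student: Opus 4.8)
The plan is to exploit the fact that, in the affine chart $z\neq 0$, $\chi$ is a polynomial automorphism of $\mathbb{A}^{2}$ of a very particular shape, and to confront this with the constraints a birational self-map of $\p^{2}$ must satisfy to be conjugate to an automorphism of a smooth projective surface.

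First I would rewrite $\chi$ in the coordinates $u=x/z$, $v=y/z$: one finds $\chi\colon(u,v)\mapsto\bigl(u+(v+u^{3})^{2},\ v+u^{3}\bigr)$, which is the composition $g_{2}\circ g_{1}$ of the elementary (de Jonqui\`eres) automorphisms $g_{1}\colon(u,v)\mapsto(u,v+u^{3})$ and $g_{2}\colon(u,v)\mapsto(u+v^{2},v)$ of $\mathbb{A}^{2}$. Thus $\chi\in\Aut(\mathbb{A}^{2})$ is of generalised H\'enon type: its Jung--van der Kulk normal form is a cyclically reduced alternating word of length $4$ whose two elementary letters have ``degrees'' $3$ and $2$, whence $\lambda(\chi)=3\cdot 2=6$ and $\deg(\chi^{n})=6^{n}$. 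I would then record the geometry on $\p^{2}$: $\chi$ contracts the line at infinity $L_{\infty}$ to the point $(1:0:0)$, which is a fixed point of $\chi$, and its base points form a single chain of infinitely near points over $(0:1:0)$ obtained by stacking the $5$-point base cluster of $g_{1}$ (cuspidal, ``exponent $3$'') on the $3$-point cluster of $g_{2}$ (``exponent $2$''); symmetrically $\chi^{-1}$ has its base cluster over $(1:0:0)$, contracts $L_{\infty}$ to the fixed point $(0:1:0)$, and its cluster is the reversed stacking (exponent $2$ then exponent $3$). In particular the Enriques diagrams of the clusters of $\chi$ and of $\chi^{-1}$ are not isomorphic, since $3\neq 2$.

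Next I would invoke the relevant criteria. If a birational self-map $\psi$ of $\p^{2}$ is conjugate to an automorphism of a smooth projective surface, then $\lambda(\psi)=1$ or $\lambda(\psi)$ is a Salem number (Diller--Favre, McMullen); and if $\lambda(\psi)>1$, then, after $\psi$ is made algebraically stable by finitely many blow-ups, the forward orbit of its (unique) contracted curve must close up onto the indeterminacy locus of $\psi^{-1}$, with the two corresponding clusters of infinitely near points matching. Write $\psi=\tau\chi$. If $\tau$ preserves $L_{\infty}$ (equivalently $\tau\chi\in\Aut(\mathbb{A}^{2})$), the amalgamated-product normal form of $\tau\chi$ is again alternating and one computes $\lambda(\tau\chi)\in\{3,6\}$: a rational integer $>1$ is not a Salem number, so $\tau\chi$ is not conjugate to an automorphism. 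If $\tau$ does not preserve $L_{\infty}$, then $\tau\chi$ still contracts only $L_{\infty}$ and has base cluster only over $(0:1:0)$; for $\tau$ outside an explicit countable union of proper subvarieties of $\PGL_{3}$ the forward $\tau\chi$-orbit of $\tau(1:0:0)$ never meets that cluster, so $\tau\chi$ is algebraically stable on $\p^{2}$, $\deg((\tau\chi)^{n})=6^{n}$, and again $\lambda(\tau\chi)=6$ is not a Salem number.

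The remaining, and genuinely hard, case is that of $\tau\notin\Aff$ lying in the exceptional set, where $\tau\chi$ fails to be algebraically stable on $\p^{2}$ and where $\lambda(\tau\chi)$ may drop to a Salem number, so the degree argument is no longer available. Here I would argue directly with the contracted curve: $\tau\chi$ contracts $L_{\infty}$ to $\tau(1:0:0)$ and $(\tau\chi)^{-1}$ contracts $\tau(L_{\infty})$ to $(0:1:0)$, so being conjugate to an automorphism forces $(\tau\chi)^{N}(L_{\infty})=\tau(L_{\infty})$ for some $N\geq 1$; tracking this, the base cluster of $(\tau\chi)^{-1}$ over $\tau(1:0:0)$ — which is the linear image under $\tau$ of the cluster of $\chi^{-1}$, then transported along a clean finite orbit by local isomorphisms — would have to coincide with the base cluster of $\tau\chi$ over $(0:1:0)$, which is the cluster of $\chi$. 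Since these operations preserve the Enriques diagram while the diagrams of the clusters of $\chi$ and of $\chi^{-1}$ differ, this is impossible for every $N$ and every $\tau$; and a short separate argument shows $\deg((\tau\chi)^{n})$ is never bounded (nor quadratically bounded), which rules out the case $\lambda(\tau\chi)=1$ with $\tau\chi$ conjugate to an automorphism. The main obstacle is exactly this cluster comparison: one has to verify that the asymmetry between the two elementary factors $g_{1},g_{2}$ of $\chi$ yields a genuine, $\tau$-independent mismatch of Enriques diagrams that cannot be undone by composing with a linear automorphism and iterating.
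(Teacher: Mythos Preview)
Your instinct about the Enriques-diagram mismatch is exactly right, but the proposal has a genuine gap: Cases~1 and~2 (affine $\tau$, and generic non-affine $\tau$) are disposed of by the Salem-number obstruction, yet the theorem's entire content lies in your Case~3, where $\lambda(\tau\chi)$ may well be a Salem number or~$1$ and the degree argument is unavailable by design. There you assert that conjugacy to an automorphism forces ``$(\tau\chi)^{N}(L_\infty)=\tau(L_\infty)$'' and then that the two base clusters must coincide after transport; the first statement is a dimension mismatch (the left side is a point), and the second is not what actually happens: when the forward orbit of the contracted point first meets the indeterminacy locus, only an \emph{initial segment} of the two towers can agree, and one must track the remaining infinitely-near points through further iterations. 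This is precisely the step you flag as ``the main obstacle'' and do not carry out; the $\lambda=1$ subcase is likewise deferred to an unspecified ``short separate argument''.

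The paper's proof is uniform in $\tau$ and bypasses dynamical degrees entirely. It first proves a criterion: if some (possibly infinitely near) point $p$ is a base-point of $\varphi^{i}$ for all large $i$ but of no $\varphi^{-i}$, then $\varphi$ is not conjugate to an automorphism of any smooth projective surface. With $\varphi=\tau\chi$, the eight base-points $p_1,\dots,p_8$ of $\varphi$ and $q_1,\dots,q_8$ of $\varphi^{-1}$ sit in towers whose proximity graphs differ exactly as you observe; the decisive asymmetry is that $p_3$ is proximate to both $p_1$ and $p_2$, while $q_3$ is proximate only to $q_2$. Letting $k$ be minimal with $(\varphi^{-(k-1)})^{\bullet}(p_1)=q_1$, this mismatch forces the common base-points of $\varphi^{-1}$ and $\varphi^{k}$ to be at most $\{q_1,q_2\}$, so $(\varphi^{-(k-1)})^{\bullet}(p_3)\neq q_3$ lies on $\mathcal{F}_1$ or $\mathcal{F}_2$ in the resolution, and one further application of $\varphi^{-1}$ carries it to a point infinitely near $p_6$, hence again infinitely near $p_3$. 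This $k$-periodicity shows $p_3$ is never a base-point of any $\varphi^{-i}$; a symmetric argument with $q_3$ then shows $p_3$ \emph{is} a base-point of every $\varphi^{i}$, and the criterion concludes.
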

\begin{ques}
Does there exists a transformation of degree $<6$ having the above property?
\end{ques}

{\it Aknowledgements.}
The author thanks Julie D\'eserti for interesting discussions on the topic during the preparation of the article.
\section{The maps $\sigma_{p,Q}$, their lifts and the groups generated by these}\label{Sec:SigmaP}
Let us describe algebraically the map $\sigma_{p,Q}$ introduced in Definition~\ref{Def:InvPn}.

For this, choose homogeneous coordinates $(x_1:x_2:\dots:x_n:y)$ on $\p^n$ and assume, up to a change of coordinates, that $p$ is equal to $(0:\dots:0:1)$. The equation of $Q$ is thus $y^2P_{1}+yP_{2}+P_3$, where $P_{1},P_{2},P_3\in \C[x_1,\dots,x_n]$ are homogeneous of degree $1,2,3$. The involution $\sigma_{p,Q}$ sends a point $(x_1:x_2:\dots:x_n:y)$ onto 
$$(-x_1(P_{2}+2yP_{1}):\dots:-x_n(P_{2}+2yP_{1}):P_{2}y+2P_3).$$

The point $p$ is a base-point of multiplicity $2$. The subscheme $\Gamma_p\subset Q\subset \p^n$ of codimension $2$ given by $P_{2}+2yP_{1}=0$ and $yP_{2}+2P_3=0$ is also contained in the base-locus, and $\sigma_{p,Q}$ is defined on $\p^n\backslash \Gamma_p$.

 The cone $V_p\subset \p^n$ given by $(P_{2})^2-4P_3P_{1}$ is contracted onto $\Gamma_p$ by $\sigma_{p,Q}$, and the hypersurface given by $P_{2}+2yP_{1}$ is contracted onto the point $p$.

Note that $\Gamma_p$ is also given by the intersection of $Q$ with the hypersurface of equation $P_{2}+2yP_{1}=0$, or with the cone $V_p$, and corresponds to the points $q\in Q$ such that the line passing through $p$ and $q$ is tangent to $Q$ at $q$, as defined in the introduction.

\begin{prop}\label{Prop:Lifts}
Denote by $\pi_p\colon X_p\to \p^n$ the blow-up of $p$, by $\pi_\Gamma\colon X\to X_p$ the blow-up of the strict transform of $\Gamma_p$ and write $\pi=\pi_p\circ \pi_\Gamma\colon X\to \p ^n$. The lift $\hat{\sigma}_p=\pi^{-1}\sigma\pi$ of $\sigma_{p,Q}$ is an automorphism of $X$.

Writing $H\subset \Pic{X}$ the pull-back of an hyperplane of $\p^n$ by $\pi$, $E\subset \Pic{X}$ the pull-back of $\pi_{p}^{-1}(p)$ by $\pi_\Gamma$ and $F$ the exceptional divisor of $\pi_\Gamma$,  $(H,E,F)$ is a basis of a sub-$\mathbb{Z}$-module of $\Pic{X}$ invariant by $\hat{\sigma}_p$ $($this sub-module is equal to $\Pic{X}$ if and only if $\Gamma_p$ is irreducible$)$. Moreover, the action of $\hat{\sigma}_p$ relative to this basis is
$$\left[\begin{array}{rrr}
3 & 2& 4 \\
-2& -1&-4\\
-1& -1&-1
\end{array}\right].$$
\end{prop}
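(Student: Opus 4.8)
The plan is to compute the action of $\hat\sigma_p$ on the three divisor classes $H$, $E$, $F$ directly, by following how $\sigma_{p,Q}$ and its two blow-ups transform hyperplanes and exceptional divisors. First I would establish that $\hat\sigma_p$ is a genuine automorphism of $X$: the only subvarieties contracted by $\sigma_{p,Q}$ are the hypersurface $W\colon P_2+2yP_1=0$ (contracted to $p$) and the cone $V_p\colon P_2^2-4P_1P_3=0$ (contracted to $\Gamma_p$), as recalled before the proposition; since $\sigma_{p,Q}$ is an involution, these are also the exceptional loci of the inverse. After blowing up $p$ and then the strict transform of $\Gamma_p$, the divisor $W$ is sent isomorphically onto the strict transform of $E$ (it maps dominantly to $p$, which is now a divisor) and $V_p$ onto the strict transform of $F$; so no divisor is contracted and $\hat\sigma_p\in\Aut(X)$. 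I would check $\sigma_{p,Q}$ resolves after exactly these two blow-ups by noting that $\Gamma_p$, cut out scheme-theoretically by the two equations displayed above, is the full non-$p$ part of the base locus, and that after blowing it up the linear system becomes base-point free — this is the geometric content behind Gizatullin's $R_p$.

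Next, for the matrix itself, I would track three explicit curves or their classes. The pull-back of a general hyperplane $L\subset\p^n$ by $\sigma_{p,Q}$ has degree $3$ (the $P_i$ have a common factor-free representative of degree $3$), passes through $p$ with multiplicity $2$, and contains $\Gamma_p$ with multiplicity $1$, because the defining cubic of $\sigma_{p,Q}^{-1}(L)$ vanishes on $\Gamma_p$. Hence $\hat\sigma_p^*(H)=3H-2E-F$, which is the first column. For the last column: the exceptional divisor $F$ over $\Gamma_p$ is, up to the contractions already accounted for, the strict transform of the cone $V_p$, whose equation $P_2^2-4P_1P_3$ has degree $4$ and whose multiplicity at $p$ is $1$ (only the $P_1P_3$ term, of $y$-degree one, contributes, and one checks $V_p$ is smooth generically along $p$), and whose multiplicity along $\Gamma_p$ is $1$; so $\hat\sigma_p^*(F)=4H-4E-F$. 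The middle column is then forced by the same kind of bookkeeping applied to $W\colon P_2+2yP_1$: its degree is $2$, its multiplicity at $p$ is $1$, and its multiplicity along $\Gamma_p$ is $1$ (since $\Gamma_p\subset W\cap Q$), giving $\hat\sigma_p^*(E)=2H-E-F$. Assembling the three columns yields exactly the stated matrix.

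As a consistency check I would verify that this matrix is an involution (squares to the identity) and preserves the intersection-theoretic data — in particular the canonical class: $K_X=-(n+1)H+(n-1)E+(n-2)F$ should be fixed, and indeed the cubic $Q$, whose class is $3H-2E-F$ by the same multiplicity count, must be invariant since $\hat\sigma_p$ fixes $Q$ pointwise; this recovers the first column as a bonus. Finally I would note that $(H,E,F)$ spans all of $\Pic{X}$ precisely when $\Gamma_p$ is irreducible, since otherwise the blow-up of $\Gamma_p$ introduces one exceptional divisor per component of $\Gamma_p$, and $F$ is only their sum.

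The main obstacle I expect is the multiplicity computations along $\Gamma_p$ — showing that $\sigma_{p,Q}^{-1}(L)$, the cone $V_p$, and the hypersurface $W$ each vanish to order exactly one along $\Gamma_p$ (and not higher), and that no further infinitely near base points occur, so that the two blow-ups in the statement genuinely suffice; this is where one must work with the local equations rather than just degrees and general position.
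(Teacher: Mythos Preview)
Your approach to computing the matrix via degrees and multiplicities is essentially the same as the paper's, and your consistency checks are reasonable. There are, however, two genuine problems.

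First, the inference ``no divisor is contracted, hence $\hat\sigma_p\in\Aut(X)$'' is wrong in dimension $n\ge 3$: a birational self-map contracting no divisor is only a \emph{pseudo}-automorphism, and may still have indeterminacy along a codimension-$2$ locus. You acknowledge this as the ``main obstacle'' at the end, but your argument as written does not close the gap. The paper handles this by an explicit local computation: after blowing up $p$ it works on the chart $U_1\cong\mathbb{A}^{n-1}\times\p^1$, then blows up the ideal $(\alpha R_2+2\beta R_1,\ \beta R_2+2\alpha R_3)$ to obtain a variety $W_1\subset\mathbb{A}^{n-1}\times\p^1\times\p^1$ on which the lift is literally the swap $((t),(\alpha:\beta),(u:v))\mapsto((t),(u:v),(\alpha:\beta))$. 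This shows in one line that $\hat\sigma_p$ is a genuine automorphism, and is much cleaner than trying to verify base-point-freeness abstractly.

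Second, your multiplicity computation for the cone $V_p$ at $p$ is wrong. The polynomials $P_1,P_2,P_3$ lie in $\C[x_1,\dots,x_n]$ and do \emph{not} involve $y$; hence $P_2^2-4P_1P_3$ is homogeneous of degree $4$ in $x_1,\dots,x_n$ alone, and since $p=(0:\cdots:0:1)$, the cone vanishes at $p$ with multiplicity $4$, not $1$. Your parenthetical about ``$y$-degree one'' is based on a misreading of the set-up. You nonetheless wrote the correct column $4H-4E-F$, which is inconsistent with the multiplicity $1$ you claimed; the paper avoids this altogether by deducing the third column from the first two using that $\hat\sigma_p$ is an involution, and only afterwards observes that it matches the cone (degree $4$, multiplicity $4$ at $p$, multiplicity $1$ along $\Gamma_p$).
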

\begin{proof}
We can view $X_p$ in $\p^n\times \p^{n-1}$ as 
$$X_p=\left\{\big((x_1:x_2:\dots:x_n:y),(z_1:\dots:z_n)\big)\ \Big|\ x_iz_j=x_jz_i\mbox { for }1\le i,j\le n\right\},$$ where $\pi_p\colon X_p\to \p^n$ is given by the projection on the first factor. The variety $X_p$ is covered by open subsets $U_1,\dots,U_n$, where $U_i$ is the set where $z_i\not=0$.

Each $U_i$ is isomorphic to $\mathbb{A}^{n-1}\times \p^1$. For $i=1$, the isomorphism is given by:
$$\begin{array}{rcl}
\mathbb{A}^{n-1}\times \p^1&\stackrel{\simeq}{\longrightarrow} & U_1\\
\big((t_2,\dots,t_{n}),(\alpha:\beta)\big)&\longmapsto &\big((\alpha:\alpha t_2:\dots:\alpha t_{n}:\beta),(1:t_2:\dots:t_{n})\big).\\
\end{array}$$

The lift of $\sigma$ preserves $U_1$ and restricts to the following birational map
$$\big((t_1,\dots,t_n),(\alpha:\beta)\big)\dasharrow \big((t_1,\dots,t_n),(-(\alpha R_{2}+2\beta R_{1}):\beta R_{2}+2\alpha R_3)\big),$$
where $R_{i}=P_i(1,t_2,\dots,t_n)$. On this chart, $\Gamma_p$ is given by 
 $\alpha R_{2}+2\beta R_{1}=0$ and $R_{2}\beta +2\alpha R_3=0$. Blowing-up the corresponding ideal, we obtain the variety $W_1\subset \mathbb{A}^n\times \p^1 \times \p^1$ given by
 $$W_1=\big\{((t_1,\dots,t_n),(\alpha:\beta),(u:v))\ |\  u(R_{2}\beta +2\alpha R_3)=-v(\alpha R_{2}+2\beta R_1)\big\},$$
 and the blow-up is given by the projection $W_1\to \mathbb{A}^n\times \p^1$ on the first two factors.
 The map $\hat{\sigma}$ corresponds in these coordinates to 
 $$\left((t_1,\dots,t_n),(\alpha:\beta),(u:v)\right)\mapsto \left((t_1,\dots,t_n),(u:v),(\alpha:\beta)\right),$$
and is thus an automorphism (the same calculation holds on the other charts).

Since $E$ is exchanged with the strict transform of the hypersurface of equation $P_{2}+2yP_1$, which has degree $2$, passes through $p$ with multiplicity $1$, and also through a general point of $\Gamma_p$, $E$ is sent onto $2H-E-F$. Moreover, $H$ is exchanged with hyperplanes of degree $3$ having multiplicity $2$ at $p$ and multiplicity one at a general point of $\Gamma_p$. This shows that $H$ is exchanged with $3H-2E-F$. The fact that $\hat{\sigma}_p$ is an involution gives the last column of the matrix, i.e. that $F$ is exchanged with $4H-4E-F$, which corresponds to the cone $(P_{2})^2+4P_1P_3=0$, which has degree~$4$, passes through $p$ with multiplicity $4$, and through $\Gamma_p$ with multiplicity $1$.
\end{proof}

We now generalise the construction by taking many points on the same cubic hypersurface.

\begin{prop}\label{Prop:Lift}
Let $Q\subset \p^n$ be a cubic hypersurface, and let $p_1,\dots,p_k\in Q$ be distinct smooth points. For $i=1,\dots,k$ we write $\sigma_i$ the map $\sigma_{p_i,Q}$ given in Definition~$\ref{Def:InvPn}$, and by $\Gamma_{i}=\Gamma_{p_i}\subset \p^n$ the codimension $2$ subset associated.
We assume that for $p_i\notin \Gamma_{j}$ for any $i\not= j$.

Denote by $\pi\colon X\to \p^n$ the following birational morphism: it first blows-up  all points $p_1,\dots,p_k$, then blows-up the strict transform of $\Gamma_{1}$, then the strict transform of $\Gamma_{2}$ and so on until blowing-up the strict transform of $\Gamma_{k}$. 

Then, $\sigma_1,\dots,\sigma_k$ lift to pseudo-automorphisms $\hat{\sigma}_1,\dots,\hat{\sigma}_k$ of $X$.
\end{prop}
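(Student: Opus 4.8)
The plan is to reduce the general case to the one-point case handled in Proposition~\ref{Prop:Lifts}, by analysing how the blow-ups interact. The key observation is that $\sigma_i$ is already an automorphism after blowing up $p_i$ and the strict transform of $\Gamma_i$ (Proposition~\ref{Prop:Lifts}); so if we show that blowing up the \emph{other} centers $p_j$ ($j\neq i$) and $\Gamma_j$ ($j\neq i$) does not reintroduce a contracted divisor for $\hat\sigma_i$, we are done. First I would fix $i$ and recall precisely what $\sigma_i$ does on $\p^n$: its only "contracting" behaviour is that the hypersurface $\{P_2+2yP_1=0\}$ is sent to the point $p_i$ and the cone $V_{p_i}=\{P_2^2-4P_1P_3=0\}$ is sent to $\Gamma_{p_i}$; apart from $p_i$ (multiplicity $2$) and $\Gamma_{p_i}$, the map $\sigma_i$ is a local isomorphism. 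On the variety $X_{p_i}$ obtained by blowing up only $p_i$ and the strict transform of $\Gamma_{p_i}$, the lift $\hat\sigma_i$ is a genuine automorphism by Proposition~\ref{Prop:Lifts}, with the divisor $E_i$ and $F_i$ permuted with the strict transforms of $\{P_2+2yP_1=0\}$ and $V_{p_i}$ respectively.

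Next I would track the remaining centers under $\sigma_i$. Since $p_j\neq p_i$ and $p_j\notin\Gamma_i$ by hypothesis, and since $\Gamma_j\not\ni p_i$ (also by hypothesis), the point $p_j$ and the subvariety $\Gamma_j$ lie in the open locus where $\sigma_i$ is a local isomorphism, unless they happen to be contained in the exceptional locus $\{P_2+2yP_1=0\}\cup V_{p_i}$. The crucial point is that $\sigma_i$ maps $Q$ to $Q$ pointwise, hence maps $Q$-subvarieties to $Q$-subvarieties; so $\sigma_i(p_j)$ is again a smooth point of $Q$ and $\sigma_i(\Gamma_j)$ is again a codimension-$2$ subset of $Q$. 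Moreover $\sigma_i$ fixes $Q$ pointwise, so in fact $\sigma_i$ restricts to the identity on any point of $Q$ where it is defined: $\sigma_i(p_j)=p_j$ and, wherever $\sigma_i$ is defined on $\Gamma_j$, it is the identity there. Thus blowing up $p_j$ and (strict transforms of) $\Gamma_j$ produces exceptional divisors over subvarieties of $Q$ that are \emph{invariant} (pointwise fixed on the base) under $\sigma_i$; the induced action of $\hat\sigma_i$ on these new exceptional divisors is therefore a birational self-map of a $\p^{\,c-1}$-bundle over the center that does not contract the whole fibre direction — one checks this in the local charts exactly as in the proof of Proposition~\ref{Prop:Lifts}, where the chart computation showed $\hat\sigma_i$ simply swaps the two $\p^1$-factors created by successive blow-ups. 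I would make this precise by noting that near a point of $Q$ the map $\sigma_i$ has a "reflection" form in the $\p^1$-direction through $p_i$, and that this is resolved (not contracted) by the blow-up of $\Gamma_i$; blowing up further centers supported on $Q$ only adds directions transverse to everything relevant.

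Then I would assemble the argument: order the blow-ups as in the statement (all $p_\ell$ first, then strict transforms of $\Gamma_1,\dots,\Gamma_k$ in order), call the resulting variety $X$ with $\pi\colon X\to\p^n$. The lift $\hat\sigma_i=\pi^{-1}\sigma_i\pi$ is birational; to see it is a pseudo-automorphism I must check that neither $\hat\sigma_i$ nor $\hat\sigma_i^{-1}=\hat\sigma_i$ contracts a divisor. The only divisors on $X$ are $H$ (pulled back from $\p^n$, or rather its strict transforms), the $E_\ell$ over the $p_\ell$, and the $F_\ell$ over the $\Gamma_\ell$. By Proposition~\ref{Prop:Lifts} and the chart computation, $\hat\sigma_i$ maps $E_i\leftrightarrow$ strict transform of $\{P_2+2yP_1=0\}$ and $F_i\leftrightarrow$ strict transform of $V_{p_i}$, neither contracted; the "potentially contracted" divisors of $\sigma_i$ on $\p^n$, namely $\{P_2+2yP_1=0\}$ and $V_{p_i}$, have their strict transforms on $X$ sent to $E_i$ and $F_i$ respectively, so they are not contracted either; and every other exceptional divisor $E_j,F_j$ ($j\neq i$) lies over a center fixed pointwise by $\sigma_i$, so its image is again a divisor over the same center (one identifies it explicitly in the local charts). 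Hence $\hat\sigma_i$ contracts no divisor, and since it is an involution the same holds for its inverse, so $\hat\sigma_i$ is a pseudo-automorphism of $X$.

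The main obstacle I anticipate is the bookkeeping in the third step: verifying in local coordinates that, after blowing up the centers in the prescribed order, the strict transform under $\hat\sigma_i$ of each exceptional divisor $E_j$ or $F_j$ ($j\neq i$) is again a divisor (not a subvariety of smaller dimension), i.e. that $\sigma_i$ does not "collapse" the normal directions of these centers. This comes down to the fact that $p_j\notin\Gamma_i$, $p_i\notin\Gamma_j$ (so the centers are disjoint from the indeterminacy/exceptional locus of $\sigma_i$ except along $Q$ itself), together with the observation that on $Q$ the map $\sigma_i$ is the identity, hence acts invertibly on a neighbourhood of any center lying in $Q$ in the directions tangent to $Q$, while the single normal direction to $Q$ is governed by the "reflection" resolved by the $F_i$-blow-up. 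Once this is checked on one chart, the symmetry among the charts $U_1,\dots,U_n$ of the proof of Proposition~\ref{Prop:Lifts} finishes it.
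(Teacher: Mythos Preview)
Your approach is essentially the paper's: you correctly identify that the whole argument rests on the fact that $\sigma_i$ fixes $Q$ pointwise, so the extra centres $p_j$ and $\Gamma_j$ ($j\neq i$) are fixed, and blowing up fixed loci of an automorphism yields an automorphism. Where you diverge is in how you handle what you call the ``main obstacle'': the possible intersection $\Gamma_i\cap\Gamma_j$, over which $\sigma_i$ is not a local isomorphism and the order of the blow-ups genuinely matters. You propose to resolve this by a local-chart verification that $F_j$ is not contracted; this could be made to work, but it is not what the paper does, and it is the one place where your proposal remains a sketch rather than a proof.

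The paper's argument is shorter: it observes that if one blows up in an order favourable to $\sigma_i$ (namely $p_i$, then $\Gamma_i$, then the remaining $p_j$, then the remaining $\Gamma_j$), one obtains a variety $X_i$ on which $\hat\sigma_i$ is a \emph{genuine automorphism}---this is exactly the ``blow up fixed centres of an automorphism'' step you already have. The point you are missing is that $X_i$ and $X$ differ only in the order in which the $\Gamma_j$ are blown up, and since the $\Gamma_j$ intersect pairwise only in codimension $\geq 2$, the induced birational map $X\dasharrow X_i$ is a pseudo-isomorphism. Conjugating the automorphism $\hat\sigma_i\in\Aut(X_i)$ by a pseudo-isomorphism gives a pseudo-automorphism of $X$, and you are done; no chart computation over $\Gamma_i\cap\Gamma_j$ is needed. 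This is precisely the bookkeeping you anticipated being painful, bypassed in one line.
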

\begin{proof}
Applying Proposition~\ref{Prop:Lifts}, $\sigma_i$ lifts to an automorphism of a variety obtained by blowing-up first $p_i$ and then the strict transform of $\Gamma_i$. Since $p_j\not\in \Gamma_i$ for $j\not=i$, all these points correspond to points of the strict transform of $Q$, and are thus fixed by the lift of $\sigma_i$. We can thus blow-up all the points $p_j$ with $j\not=i$ and $\sigma_i$ again lifts to an automorphism. The strict transforms of the lifts of $\Gamma_j$ for $j\not=i$ are again contained in the strict transform of $W$ and thus fixed pointwise by the automorphism. We blow-up all these schemes and lift $\sigma_i$ to an automorphism of a variety $X_i$ obtained.

Note that $X_1=X$, but that in general $X_i$ is not the same as $X_j$ for $i\not=j$, since the varieties $\Gamma_i$ and $\Gamma_j$ intersect: the choice in the order of the sets blown-up is important. The maps $X_j\dasharrow X_i$ induced by the blow-ups are pseudo-isomorphisms, they are isomorphisms outside the pull-back of the sets where $\Gamma_i$ and $\Gamma_j$ intersect, which have codimension $\ge 2$. This implies that $\hat{\sigma_1}\in \Aut(X_1)$ and that all others $\hat{\sigma_i}$ are pseudo-automorphisms of $X$.
\end{proof}

The following proposition describes the group generated by these pseudo-auto\-morphisms, and the dynamical properties of its elements. It yields -- with Proposition~\ref{Prop:Lift} -- the proof of Theorem~\ref{Thm:CubicsPseudo}.
\begin{prop}
Let $\hat{\sigma}_1,\dots,\hat{\sigma}_k\in \Bir(X)$ be pseudo-automorphisms as in Proposition~$\ref{Prop:Lift}$. These element generate a free product  $$G=\star_{i=1}^k <\hat{\sigma}_{i}>,$$
and we have the following description of elements of $G$:

\begin{enumerate}
\item
Any element of finite order is conjugate to a $\hat{\sigma}_{i}$ and has dynamical degree~$1$;
\item
Any element conjugate to $(\hat{\sigma}_{i}\hat{\sigma}_{j})^m$ for $i\not=j$ and $m\ge 1$ is of infinite order, and its dynamical degree is equal to $1$;
\item
Any other element has dynamical degree $>1$.
\end{enumerate}
\end{prop}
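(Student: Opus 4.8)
The plan is to analyze the action of each $\hat\sigma_i$ on the sublattice $N \subset \Pic X$ spanned by the classes $H$, $E_1,\dots,E_k$ (exceptional divisors over the $p_i$) and $F_1,\dots,F_k$ (exceptional divisors over the $\Gamma_i$), and to reduce the statement about dynamical degrees to the growth of words in a matrix group acting on $N$. First I would record, from Proposition~\ref{Prop:Lifts}, that $\hat\sigma_i$ acts as the given $3\times 3$ involution on the triple $(H,E_i,F_i)$ and fixes all $E_j,F_j$ with $j\ne i$; the crucial point that makes the combinatorics clean is that $\hat\sigma_i$ keeps $H - E_i$ and sends the "degree" coordinate predictably, so that iterating an alternating word $\hat\sigma_{i_1}\hat\sigma_{i_2}\cdots$ makes the $H$-coefficient grow. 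Since $\pi$ is a composition of blow-ups of smooth centers and each $\hat\sigma_i$ is a pseudo-automorphism, the dynamical degree of any $g\in G$ equals the spectral radius of $(g^*)^{-1}$ (equivalently $g_*$) acting on $N^1(X)$, and since $N$ is $g^*$-invariant it suffices to compute the spectral radius of the restriction to $N$; I would invoke the fact (true because these are pseudo-automorphisms, so $(g^n)^* = (g^*)^n$ with no correction terms, exactly as for automorphisms of surfaces) that no cancellation occurs.

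The free product statement I would get first, as it feeds the rest. The standard tool is a ping-pong (table-tennis) argument on $N \otimes \mathbb R$: I would exhibit disjoint "attracting" cones $C_1,\dots,C_k$ such that $\hat\sigma_i^{\,m}(C_j) \subset C_i$ for all $j \ne i$ and all $m \ge 1$ — the natural candidate for $C_i$ is the set of classes whose coordinates in the $(H,E_i,F_i)$-block dominate. Because each $\hat\sigma_i$ has order $2$, the relevant form of ping-pong is the one for free products of $\mathbb Z/2$'s: a reduced alternating word $\hat\sigma_{i_1}\cdots\hat\sigma_{i_r}$ with $i_\ell \ne i_{\ell+1}$ moves a base vector out of $C_{i_r}$ and lands it inside $C_{i_1}$, hence is nontrivial. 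Once $G = \star_i \langle \hat\sigma_i\rangle$ is established, part (1) is the Kurosh/normal-form fact that every finite-order element of a free product of $\mathbb Z/2$'s is conjugate into one of the factors, i.e. conjugate to some $\hat\sigma_i$; its dynamical degree is $1$ because it is conjugate to an automorphism of finite order (so $\deg(g^n)$ is bounded, or: the matrix is periodic).

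For parts (2) and (3) I would pass to the explicit $5\times5$ (for $k=2$, in general $(2k+1)\times(2k+1)$) integer matrices $M_i$ representing $\hat\sigma_i$ on $N$ with respect to $(H,E_1,F_1,E_2,F_2,\dots)$. For (2): compute $M_iM_j$ for $i\ne j$ on the span of $H,E_i,F_i,E_j,F_j$ and show its characteristic polynomial has all roots on the unit circle, in fact that $M_iM_j$ has a power that is unipotent (equivalently the eigenvalue $1$ occurs with a nontrivial Jordan block but no eigenvalue off the circle); then $\deg((\hat\sigma_i\hat\sigma_j)^{mn})$ grows polynomially in $n$, so the dynamical degree is $1$, while the nontrivial Jordan block (or the fact that $(M_iM_j)^m \ne \mathrm{id}$) shows infinite order. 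For (3): an element not covered by (1) or (2) has reduced form, after conjugation, either of length $\ge 2$ involving at least three distinct "syllable transitions" or $(\hat\sigma_i\hat\sigma_j)^m\hat\sigma_\ell$-type — in all remaining cases one checks the product matrix is not quasi-unipotent by producing an eigenvector with eigenvalue $\lambda$ real $>1$; concretely, the $H$-coordinate satisfies a linear recursion whose companion matrix has spectral radius $>1$, which one sees because each syllable multiplies the leading coordinate by a factor $\ge$ the Perron value of the $3\times3$ block (its eigenvalues are $1$ and $-1$, but compositions of distinct blocks through the shared $H$-direction produce genuine expansion). A clean way to organize this: the subgroup of $\GL(N)$ generated by the $M_i$ is a quotient of $G$ acting on a tree (Bass–Serre), and the translation length / expansion dichotomy on the associated building gives exactly the trichotomy.

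The main obstacle I expect is part (3): ruling out spectral radius $1$ for \emph{every} remaining word, uniformly in $k$ and in the word length, rather than case by case. The honest route is to find an invariant "Minkowski-type" quadratic form on $N$ of signature $(1, 2k)$ preserved by all $M_i$ (the intersection-type form twisted so that $H^2$ is positive and the exceptional classes are negative), so that $G$ embeds in $O(1,2k)$; then the trichotomy becomes the standard elliptic/parabolic/loxodromic trichotomy for isometries of hyperbolic $2k$-space, and loxodromic $\Leftrightarrow$ dynamical degree $>1$, parabolic $\Leftrightarrow$ exactly the $(\hat\sigma_i\hat\sigma_j)^m$-class, elliptic $\Leftrightarrow$ finite order. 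Verifying that such a form exists and is preserved — i.e. that $M_i^{\mathsf T} J M_i = J$ for the $3\times3$ block against $J = \mathrm{diag}(1,-1,-1)$ scaled appropriately, which one reads off the matrix in Proposition~\ref{Prop:Lifts} — is the one computation I would actually carry out in detail, since everything else then follows from well-known facts about $\mathrm{Isom}(\mathbb H^{2k})$ and about free products acting on trees.
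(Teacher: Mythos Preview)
Your approach differs from the paper's. The paper never introduces an invariant quadratic form or the hyperbolic trichotomy; it works instead with the vectors $\nu_i = 2H - 2E_i - F_i$, for which Proposition~\ref{Prop:Lifts} gives $\hat\sigma_i(H) = H + \nu_i$, $\hat\sigma_i(\nu_i) = -\nu_i$, and $\hat\sigma_i(\nu_j) = \nu_j + 2\nu_i$ for $j\ne i$. It then proves by induction on the reduced length $r$ that $(\hat\sigma_{a_r}\cdots\hat\sigma_{a_1})(H) = H + \sum_i \alpha_i \nu_i$ with all $\alpha_i \ge 0$, with $\alpha_{a_r}$ strictly the largest (this already gives the free product, without ping-pong cones), and with $\sum_i \alpha_i \ge (5/3)^t$ where $t$ counts the positions $i\ge 3$ at which $a_i \ne a_{i-2}$. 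Since $\deg(\varphi) = 1 + 2\sum_i \alpha_i$, this growth estimate forces dynamical degree $>1$ for every cyclically reduced word that is not an alternation in two letters; the two-letter case is a single $3\times 3$ matrix computation showing the only eigenvalue is $1$.

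Your hyperbolic route can be made to work---an invariant form does exist, though on $(H,E_i,F_i)$ it is $\mathrm{diag}(1,-1,-4)$, not $\mathrm{diag}(1,-1,-1)$---but there is a genuine gap in your treatment of part~(3). The elliptic/parabolic/loxodromic trichotomy only \emph{names} the three possible dynamical behaviours; it does not tell you which reduced words in the $\hat\sigma_i$ land in which class. Your assertion ``parabolic $\Leftrightarrow$ exactly the $(\hat\sigma_i\hat\sigma_j)^m$-class'' is precisely the statement to be proved, and neither the Bass--Serre tree (on which \emph{every} infinite-order element, including each $(\hat\sigma_i\hat\sigma_j)^m$, acts by a nonzero translation) nor the mere existence of $J$ supplies it. What is actually required is a uniform growth or eigenvalue estimate along arbitrary reduced words involving three or more letters, and once you write that down you are doing essentially the paper's inductive inequality on the coefficients $\alpha_i$---or, equivalently, a ping-pong with nested horoballs rather than cones, which is the same bookkeeping in geometric dress.
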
\begin{proof}
Let $H\in \Pic{X}$ be the pull-back of an hyperplane of $\p^n$, denote by $E_1,\dots,E_k\in \Pic{X}$ the total pull-back of the divisors obtained by blowing-up the $p_i$, and by $F_1,\dots,F_k\in \Pic{X}$ the exceptional divisors associated to $\Gamma_1,\dots,\Gamma_k$, using the same notation as before. The actions of $\hat{\sigma_1},\dots,\hat{\sigma_k}$ on $\Pic{X}$ are given by Proposition~\ref{Prop:Lifts}:
$$\begin{array}{lcrrr}
\hat{\sigma_i}(H)&=&3H&-2E_i&-F_i,\\ 
\hat{\sigma_i}(E_i)&=&2H&-E_i&-F_i,\\
 \hat{\sigma_i}(F_i)&=&4H&-4E_i&-F_i,\\
 \hat{\sigma_i}(E_j)&=&E_j&\mbox{ for }i\not=j,\\
  \hat{\sigma_i}(F_j)&=&F_j&\mbox{ for }i\not=j.\end{array}$$

Writing $\nu_i=\hat{\sigma_i}(H)-H=2H-2E_i-F_i$ we get
\begin{equation}\label{EqAppSig}\begin{array}{rcl}
\hat{\sigma_i}(H)&=&H+\nu_i,\\
\hat{\sigma_i}(\nu_i)&=&-\nu_i,\\
\hat{\sigma_i}(\nu_j)&=&\nu_j+2\nu_i\mbox{ for }i\not=j.\end{array}\end{equation}

Let us choose any element $\varphi=\sigma_{a_r}\dots \sigma_{a_1}$, where $a_1,\dots,a_r\in \{1,\dots,k\}$, $a_i\not=a_{i+1}$ for $i=1,\dots,r-1$. By induction on $r$, we prove that $\varphi(H)=H+\sum\limits_{i=1}^k \alpha_i v_i$, satisfying the following properties 
\begin{enumerate}
\item[$(i)$]
$\alpha_1,\dots,\alpha_k$ are non-negative integers;
\item[$(ii)$]
$\alpha_{a_r}> \alpha_i$ for $i\not= a_r$;
\item[$(iii)$]
if $r>1$, then $\alpha_{a_{r-1}}> \alpha_i$ for $i\notin \{a_r,a_{r-1}\}$;
\item[$(iv)$]
$\sum_{i=1}^k \alpha_i\ge (\frac{5}{3})^t$, where $t=\#\{i\ |\ i\ge 3, a_i\not=a_{i-2}\}$.
\end{enumerate}
When $r=1$, the result is obvious since $\varphi(H)=H+\nu_{a_1}$. We assume the result true for $r-1$ and prove it for $r$. We have $\varphi(H)=\sigma_{a_r}(V)$, where $V=\sigma_{a_{r-1}}\circ\dots\circ \sigma_1(H)=H+\sum\limits_{i=1}^k \beta_i v_i$ and all $\beta_i$ satisfy the properties above. In particular, $\sum\limits_{i\not=a_r} \beta_i\ge \beta_{a_{r-1}}> \beta_{a_r}$.
Applying $(\ref{EqAppSig})$, we get
$$\begin{array}{lcl}
\alpha_i&=&\beta_i\mbox{ for }i\not=a_r,\\
\alpha_{a_r}&=&1-\beta_{a_r}+2\sum\limits_{i\not=a_r} \beta_i> \sum\limits_{i\not=a_r}\beta_i\ge\beta_{a_{r-1}}>\beta_{a_r},\end{array}$$
which proves the first three assertions. To prove $(iv)$, we compute
$$\sum_{i=1}^k \alpha_i=1-4\beta_{a_r}+3 \sum_{i=1}^k \beta_i.$$

We always have $\sum\limits_{i=1}^k \alpha_i>\sum\limits_{i=1}^k \beta_i$. It suffices thus to prove that if $r\ge 3$ and $a_r\not=a_{r-2}$, then $\sum\limits_{i=1}^k \alpha_i\ge \frac{5}{3}\sum\limits_{i=1}^k \beta_i$.

 The fact that $r\ge 3$ and $a_r\not=a_{r_2}$  gives $\beta_{a_r}<\beta_{a_{r-1}}$ and $\beta_{a_r}<\beta_{a_{r-2}}$, and thus implies
$$\begin{array}{rcl}
\sum\limits_{i=1}^k \alpha_i-\frac{5}{3}\sum\limits_{i=1}^k \beta_i&=&1-4\beta_{a_r}+(3-\frac{5}{3})\sum\limits_{i=1}^k \beta_i,\\
&=&1-\frac{8}{3}\beta_{a_r}+\frac{4}{3}\sum\limits_{i\not=a_r} \beta_i,\end{array}$$
which is positive since $2\beta_{a_r}<\beta_{a_{r-1}}+\beta_{a_{r-2}}\le \sum\limits_{i\not=a_r} \beta_i$.\\

Now that $(i)-(iv)$ have been proved, we show how they imply the result. First, assertions $(i)$ and $(ii)$  show that $G$ is the free product of the groups $<\sigma_i>\cong \mathbb{Z}/2\mathbb{Z}$. Second, any non-trival element of the group is conjugated to $\varphi=\sigma_{a_r}\dots \sigma_{a_1}$, where $a_1,\dots,a_r\in \{1,\dots,k\}$, $a_i\not=a_{i+1}$ for $i=1,\dots,r-1$ and $a_r\not=a_1$. 

The element $\varphi$ has finite order if and only if $r=1$. If $r>1$, we compute its dynamical degree by computing $\deg (\varphi^n)$ for $n\in \mathbb{N}$. The degree here is the degree as a birational map of $\p^3$, which is the degree of the system $\pi(\varphi^{-n}(H))$. Since each $\nu_i$ corresponds to a divisor of degree $2$, we get
$$\deg(\varphi^n)=1+2\sum_{i=1}^k \alpha_i \mbox{ ifÊ }\varphi^{-n}(H)=H+\sum_{i=1}^k \alpha_k\nu_k.$$
The assertions above imply that if the set $\{a_1,\dots,a_r\}$ has at least three elements, $\deg(\varphi^n)\ge (\frac{5}{3})^n$, so the dynamical degree of $\varphi$ is strictly bigger than $1$. The only case where the dynamical degree $1$ could be one is when  $\varphi=(\hat{\sigma}_{i}\hat{\sigma}_{j})^m$ for $i\not=j$ and $m\ge 1$. It remains to prove that in this case, the dynamical degree is~$1$; and we only have to consider the case $m=1$. 
The submodule of $\Pic{X}$ generated by $H,\nu_i,\nu_j$ is invariant by $\varphi$, and the action relative to this basis is 

$$\left(\begin{array}{rrr}
1& 0 & 0\\
1& -1 & 2\\
0 & 0 & 1
\end{array}\right)\cdot 
\left(\begin{array}{rrr}
1& 0 & 0\\
0& 1 & 0\\
1 & 2 & -1
\end{array}\right)=
\left(\begin{array}{rrr}
1& 0 & 0\\
3& 3& -2\\
1 & 2 & -1
\end{array}\right),$$
which has only one eigenvalue, equal to $1$. This achieves the proof.\end{proof}

\begin{rema}
Note that the dynamical degree of any element of the free group $G$ generated above is easy to compute. 

$(i)$ As we observed in the above proof, the dynamical degree of $\sigma_{i}\cdot \sigma_j$, for $i\not=j$, is the biggest eigenvalue of 

$$\left(\begin{array}{rrr}
1& 0 & 0\\
1& -1 & 2\\
0 & 0 & 1
\end{array}\right)\cdot 
\left(\begin{array}{rrr}
1& 0 & 0\\
0& 1 & 0\\
1 & 2 & -1
\end{array}\right)=
\left(\begin{array}{rrr}
1& 0 & 0\\
3& 3& -2\\
1 & 2 & -1
\end{array}\right),$$
whose characteristic polynomial is $(x-1)^3$. This dynamical degree is thus $1$.

$(ii)$ We can do a similar calculation with $\sigma_{i}\cdot \sigma_j\cdot \sigma_k$ where $i,j,k$ are pairwise distinct. The dynamical degree is the the highest real eigenvalue of 

$$\left(\begin{array}{rrrr}
1& 0 & 0& 0\\
1& -1 & 2& 2\\
0 & 0 & 1& 0\\
0 & 0 & 0& 1
\end{array}\right)\cdot 
\left(\begin{array}{rrrr}
1& 0 & 0& 0\\
0 & 1& 0& 0\\
1& 2 & -1& 2\\
0 & 0 & 0& 1
\end{array}\right)\cdot 
\left(\begin{array}{rrrr}
1& 0 & 0& 0\\
0 & 1& 0& 0\\
0&  0& 1& 0\\
1 & 2 & 2& -1
\end{array}\right)=
\left(\begin{array}{rrrr}
1& 0 & 0& 0\\
9 & 15& 10& -6\\
3&  6& 3& -2\\
1 & 2 & 2& -1
\end{array}\right),$$
which is $9+4\sqrt{5}\sim 17.944272.$

$(iii)$ All other dynamical degrees can be computed in the same way.
\end{rema}
\begin{rema}
With the descriptions above, it is easy to take explicit cubic hypersurfaces, for example smooth ones, and to compute explicitly the locus to blow-up and the involutions.
\end{rema}
Now that Theorem~\ref{Thm:CubicsPseudo} is proved, we finish the section with the proof of its corollary.
\begin{proof}[proof of Corollary~$\ref{Coro1}$]
In any dimension $n\ge 3$, we take a smooth cubic hypersurface $Q\subset \p^n$, and choose three distinct general points $p_1,p_2,p_3$ such that the line through two of them intersects the cubic into another point. These points in $Q$ satisfy then the conditions of Theorem~\ref{Thm:CubicsPseudo}, and yields pseudo-automorphisms $\hat{\sigma}_1,\hat\sigma_2,\hat\sigma_3$ of the variety $X$ obtained by blowing-up $p_1,p_2,p_3$ and the varieties $\Gamma_{1},\Gamma_2,\Gamma_3$ associated. These latter being irreducible, the rank of $\Pic{X}$ is exactly $7$ (a fact which is false in dimension $2$).

Because $<\hat{\sigma}_1,\hat\sigma_2,\hat\sigma_3>$ is the free product $\star_{i=3}^k <\hat{\sigma}_{i}>$, the group generated by 

$\alpha=\hat{\sigma}_1\hat\sigma_2\hat\sigma_3$ and
$\beta=\hat\sigma_2\hat{\sigma}_1\hat\sigma_2\hat\sigma_3\hat\sigma_2$ is the free group over two generators. Moreover, none of the non-trivial elements of the group is conjugate to an element of length $<3$, so each element has dynamical degree $>1$.
\end{proof}

\section{The involutions on $\p^2$ and the blow-up}\label{Sec:Dim2}
In this section, we deal with dimension $2$.

\subsection{Degree $2$}
We will say that two birational maps $\varphi$, $\varphi'$ are \emph{projectively equivalent} if $\varphi=\alpha\varphi'\beta$ for some $\alpha,\beta\in \Aut(\p^2)$. In Question~\ref{QuesBed}, we can only study equivalence classes, since $\alpha\varphi\beta$ is conjugate to $\varphi(\beta\alpha^{-1})$. We can also replace $\varphi$ with $\varphi^{-1}$.

There are three equivalence classes of birational maps of $\p^2$ of degree $2$. Each such map has three base-points of multiplicity $1$, which are not collinear, and the classes correspond to

\begin{enumerate}
\item[(i)]
three points $p_1,p_2,p_3$ that belong to $\p^2$ as proper points;
\item[(ii)]
two points $p_1,p_2$ that belong to $\p^2$ as proper points, the point $p_3$ is infinitely near to $p_1$;
\item[(iii)]
one point $p_1$ is a proper point of $\p^2$, $p_2$ is infinitely near to $p_1$ and $p_3$ is infinitely near to $p_2$
\end{enumerate}

There are many known examples of quadratic maps of type $(i)$ that are conjugate to automorphisms of projective surfaces and have dynamical degree $>1$. See for example \cite{bib:BedKim06} or \cite{bib:BedKim09}. For examples of type $(iii)$, see \cite{bib:BedKim10}. In fact, in \cite{bib:Diller}, all possible types of quadratic maps preserving cubics and being conjugate to to automorphisms of projective surfaces and have dynamical degree $>1$ are constructed. They depend on an orbit data $[n_1,n_2,n_3]$, which provides all types $(i)$, $(ii)$, $(iii)$ depending on the number of $n_i$ equal.

All these examples yield the following:
\begin{lemm}
If $\varphi$ is a birational map of $\p^2$ of degree $2$, there exists an automorphism $\tau\in \Aut(\p^2)$  such that $\tau\varphi$ is conjugate to an automorphism of a smooth projective rational surface with dynamical degree $>1$.
\end{lemm}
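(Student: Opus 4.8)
The plan is to reduce the general degree-$2$ map to one of the three normal forms listed above, and then to cite the existing constructions. Since the statement only asserts the \emph{existence} of a $\tau$, and since Question~\ref{QuesBed} is invariant under projective equivalence (as noted, $\alpha\varphi\beta$ is conjugate to $\varphi(\beta\alpha^{-1})$, and $\varphi$ may be replaced by $\varphi^{-1}$), it suffices to prove the claim for one representative of each of the three equivalence classes $(i)$, $(ii)$, $(iii)$. So I would first record this reduction carefully: if $\varphi$ has type $(j)$, write $\varphi = \alpha \varphi_j \beta$ with $\varphi_j$ a fixed model and $\alpha,\beta \in \Aut(\p^2)$; then for any $\tau_0 \in \Aut(\p^2)$, the map $(\alpha^{-1}\tau_0\alpha)\cdot\varphi = \alpha^{-1}\tau_0\alpha\cdot\alpha\varphi_j\beta = \alpha^{-1}(\tau_0\varphi_j)\beta$, which is projectively equivalent, hence birationally conjugate, to $\tau_0\varphi_j$. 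Thus a good $\tau_0$ for $\varphi_j$ yields a good $\tau = \alpha^{-1}\tau_0\alpha$ for $\varphi$.

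Next I would invoke the references. For type $(i)$ (three proper non-collinear base-points), the work of Bedford--Kim (e.g.\ \cite{bib:BedKim06}, \cite{bib:BedKim09}) produces automorphisms $\tau\varphi_i$ of rational surfaces with dynamical degree $>1$; concretely one chooses $\tau$ so that the orbit of each base-point of $\varphi_i^{-1}$ under $\tau\varphi_i$ eventually lands on a base-point of $\varphi_i$, and one blows up the finite union of these orbits. For type $(iii)$ (a chain of three infinitely near points), the analogous construction is in Bedford--Kim \cite{bib:BedKim10}. For the intermediate type $(ii)$, I would appeal to Diller's classification \cite{bib:Diller}: the quadratic maps preserving a cubic that lift to automorphisms of dynamical degree $>1$ are parametrized by orbit data $[n_1,n_2,n_3]$, and the three base-point configurations $(i)$, $(ii)$, $(iii)$ arise according to the pattern of coincidences among the $n_i$ (all distinct / exactly two equal / all equal), so in particular type $(ii)$ is realized. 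In each case the resulting surface is rational (it is a blow-up of $\p^2$), which is the only extra adjective needed in the conclusion.

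The main obstacle is really bookkeeping rather than mathematics: one must check that the cited families genuinely cover all three projective-equivalence classes, and in particular that type $(ii)$ is not missed. This is where Diller's orbit-data analysis is essential — it is the reference that handles $(ii)$, since the earliest Bedford--Kim papers focus on the "generic" case $(i)$ and the later ones on $(iii)$. A secondary subtlety is that the maps constructed in those papers are specific quadratic maps, not an arbitrary representative of the class; but since all quadratic maps of a fixed type are projectively equivalent, this costs nothing by the reduction above. I would therefore present the proof as: (1) the reduction to three models via projective equivalence; (2) a short paragraph pointing to \cite{bib:BedKim06}/\cite{bib:BedKim09} for $(i)$, \cite{bib:Diller} for $(ii)$ (and, redundantly, for all three), and \cite{bib:BedKim10} for $(iii)$; (3) the remark that the surfaces produced are rational. \proofend
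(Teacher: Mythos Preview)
Your approach is exactly the paper's: the lemma is presented there as an immediate consequence of the same reduction to the three projective-equivalence classes together with the same citations (\cite{bib:BedKim06}, \cite{bib:BedKim09} for type~$(i)$, \cite{bib:BedKim10} for type~$(iii)$, and \cite{bib:Diller} via orbit data $[n_1,n_2,n_3]$ for all three, in particular~$(ii)$), with no further argument given.

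One small slip to fix in your reduction: with $\varphi=\alpha\varphi_j\beta$, the claim ``projectively equivalent, hence birationally conjugate'' is not valid as stated ($\alpha\psi\beta$ is conjugate to $\psi\cdot(\beta\alpha)$, not to $\psi$), and your formula $\tau=\alpha^{-1}\tau_0\alpha$ does not give what you wrote. The correct bookkeeping is to conjugate $\tau\varphi=\tau\alpha\varphi_j\beta$ by $\beta$, obtaining $(\beta\tau\alpha)\varphi_j$; hence choosing $\tau=\beta^{-1}\tau_0\alpha^{-1}$ makes $\tau\varphi$ genuinely conjugate to $\tau_0\varphi_j$, and the rest of your argument goes through.
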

\subsection{Degree $3$}

It is also possible to describe equivalence classes of elements of $\Bir(\p^2)$ of degree $3$. There are in fact $32$ algebraic families, corresponding to the type of base-points (if some are collinear, or if some infinitely near,...), or equivalently to the curves contracted (see \cite[Table of page 176]{bib:CerDesCub}). The family of biggest dimension (dimension $2$) consists of cubic maps $\varphi$ having five proper base-points, no $3$ being collinear. All others have dimension $<1$. 

We will prove that for a general cubic map $\varphi\in \Bir(\p^2)$, there exists $\tau\in \Aut(\p^2)$ such that $\tau\varphi$ is conjugate to an automorphism of positive entropy. To do this, we will use involutions $\sigma_{p,Q}$ associated to a point $p$ of a smooth cubic $\Gamma$ (see Definition~\ref{Def:InvPn}). Recall that $\sigma_{p,Q}$ preserves a general line $L$ passing through $p$ and restricts on it to the unique involution that fixes the two points $(L\cap \Gamma)\backslash\{p\}$.

The base-points of $\sigma_{p,Q}$ are described by the following lemma:
\begin{lemm} \cite[Proposition 12]{bib:CubicInertiaJ}\label{Lem:CubicInvProp}
Let $C\subset \Pn$ be a smooth cubic curve, let $p\in C$ and let $\sigma_{p,Q}$ be the element defined in Definition $\ref{Def:InvPn}$. The following occur:

$1.$ The degree of $\sigma_{p,Q}$ is $3$, and ${\sigma_{p,Q}}^2=1$, i.e. $\sigma_{p,Q}$ is a cubic involution.

$2.$ The base-points of $\sigma_{p,Q}$ are the point $p$ -- which has multiplicity $2$ -- and the four points $p_1, p_2, p_3, p_4$ such that the line passing through $p$ and $p_i$ is tangent at $p_i$ to $C$.

$3.$ If $p$ is not an inflexion point of $C$, all the points $p_1,...,p_4$ belong to $\Pn$. Otherwise, only three of them belong to $\Pn$, and the fourth is the point in the blow-up of $p$ that corresponds to the tangent of $C$ at $p$.\proofend
\end{lemm}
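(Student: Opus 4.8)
The plan is to reduce everything to the explicit polynomial formula for $\sigma_{p,Q}$ derived at the start of Section~\ref{Sec:SigmaP}, specialised to $n=2$. Choose coordinates $(x_1:x_2:y)$ so that $p=(0:0:1)$; since $C$ is smooth at $p$, the equation of $C$ is $y^2P_1+yP_2+P_3$ with $P_1\neq 0$ (the tangent line at $p$ is $P_1=0$), and $P_1,P_2,P_3$ homogeneous of degrees $1,2,3$ in $x_1,x_2$. The map is
$$\sigma_{p,Q}\colon (x_1:x_2:y)\dasharrow \bigl(-x_1(P_2+2yP_1):-x_2(P_2+2yP_1):P_2y+2P_3\bigr).$$
For the first assertion, I would note that the three coordinate polynomials have degree $3$ and, for $C$ general, no common factor, so $\deg\sigma_{p,Q}=3$; that $\sigma_{p,Q}^2=1$ is already established (it is an involution by Definition~\ref{Def:InvPn}, or one checks it directly from the formula). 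That gives part~1.

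For part~2, I would locate the base-points as the common zeros of the three cubics. Factoring out, the base locus is $\{x_1=x_2=0\}\cup\{P_2+2yP_1=0,\ P_2y+2P_3=0\}$; the first piece is exactly the point $p$, and I would check its multiplicity is $2$ by intersecting with a general line through $p$ (a general line through $p$ meets the system in $p$ with multiplicity $2$ and in two further points, consistent with $\deg 3$ and the geometric description). The second piece is the scheme $\Gamma_p$ of the general discussion: eliminating, it lies on $(P_2)^2-4P_1P_3=0$ intersected with $C$, hence consists (for $C$ general, so that $p$ is not an inflexion point and the relevant intersection is transverse) of $4$ distinct points $p_1,\dots,p_4$, each of multiplicity $1$. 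The identification with ``the line through $p$ and $p_i$ is tangent to $C$ at $p_i$'' is exactly the last sentence of the algebraic discussion preceding Proposition~\ref{Prop:Lifts}, which I would simply cite: $\Gamma_p$ is the locus of $q\in C$ such that the line $pq$ is tangent to $C$ at $q$. Counting: the sextic $(P_2)^2-4P_1P_3$ is a cone with vertex $p$, i.e.\ a union of $4$ lines through $p$ (not $6$, since $p$ is a point of $C$ so $P_1P_3$ and $P_2^2$ share the behaviour forced by $p\in C$; more precisely the relevant binary sextic in $x_1,x_2$ vanishing condition cuts $4$ lines when $p\notin C$ is replaced by $p\in C$ smooth — here I would do the small degree bookkeeping carefully), and each line meets $C$ residually in one point off $p$, giving the four $p_i$.

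For part~3, the dichotomy is whether $p$ is an inflexion point of $C$, i.e.\ whether the tangent line $P_1=0$ meets $C$ at $p$ with multiplicity $3$. If not, one of the four tangent lines from $p$ could coincide with the tangent at $p$ only in degenerate situations; the subtle point is that, when $p$ is an inflexion point, the tangent line at $p$ is itself one of the four ``tangent-from-$p$'' lines, but the corresponding point of tangency degenerates to $p$ — so on $\p^2$ only three honest points $p_1,p_2,p_3$ survive, and the fourth base-point is infinitely near $p$, corresponding to the tangent direction of $C$ at $p$. I would see this by passing to the blow-up $X_p$ and reading off, from the chart computation as in the proof of Proposition~\ref{Prop:Lifts} with $n=2$, which point on the exceptional $\p^1$ lies in the base locus of the lift; it is the direction $P_1=0$. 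The main obstacle is precisely this degeneration analysis in part~3: making rigorous the claim that exactly one of the four tangent-from-$p$ points collides with $p$ (and no more) when $p$ is a flex, and identifying the resulting infinitely near base-point with the tangent direction — this is a local computation on the blow-up, and the cleanest route is the explicit chart description of $X_p$ and of the lift of $\sigma_{p,Q}$ already used in Proposition~\ref{Prop:Lifts}, specialised to $n=2$. Since the statement is quoted from \cite{bib:CubicInertiaJ}, I would ultimately just cite that reference for the detailed verification. \proofend
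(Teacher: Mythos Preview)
The paper does not prove this lemma at all: it is stated with a citation to \cite[Proposition~12]{bib:CubicInertiaJ} and closed immediately with a \verb|\proofend|, so there is no argument to compare against. Your sketch is therefore strictly more than what the paper provides, and it is essentially correct as an outline, ending anyway with the same deferral to \cite{bib:CubicInertiaJ}.

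One slip worth fixing: you call $(P_2)^2-4P_1P_3$ a ``sextic'' and then scramble to explain why only four lines appear. In fact $P_1,P_2,P_3$ are homogeneous of degrees $1,2,3$ in $x_1,x_2$ alone, so $(P_2)^2-4P_1P_3$ is a binary form of degree $4$, not $6$; as a curve in $\p^2$ it is a quartic cone with vertex $p$, i.e.\ a union of four lines through $p$. This immediately gives the four tangent lines and hence the four points $p_1,\dots,p_4$, with no further bookkeeping needed. With that correction your argument for parts~1 and~2 is clean, and your approach to part~3 via the chart on $X_p$ (as in Proposition~\ref{Prop:Lifts} with $n=2$) is the natural one.
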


Since $\sigma_{p,Q}$ is an involution, the blow-up $\pi\colon X\to \p^2$ of its five base-points conjugates it to an automorphism of $X$. We now describe the action of this automorphism on the Picard group of $X$.
\begin{lemm}\label{Lem:ActionSigma}
Let $C\subset \Pn$ be a smooth cubic curve, let $p\in C$ and let $\sigma_{p,Q}$ be the element defined in Definition $\ref{Def:InvPn}$. Let $p_1,p_2,p_3,p_4$ be the base-points of $\sigma_{p,Q}$ of multiplicity one $($see Proposition~$\ref{Lem:CubicInvProp})$, and let $\pi\colon X\to \p^2$ be the blow-up of the five base-points. 

Denote by $L\subset \Pic{X}$ the pull-back of a general line of $\p^2$, by $E_i$ the divisor corresponding to the point $p_i$, and by $E$ the divisor corresponding to $p$, which is the total pull-back on $X$ of the curve contracted on $p$ $($if $p$ is an inflexion point, $E$ corresponds to a reducible curve$)$. The set $(L,E,E_1,\dots,E_4)$ is an orthogonal basis of $\Pic{X}$; the element have self-intersection $(1,-1,-1,-1,-1)$ and the action of $\hat{\sigma}=\pi^{-1}\sigma_{p,Q}\pi\in \Aut(X)$ on the Picard group is 
\[\left[\begin{array}{rrrrrr}
3 & 2 & 1& 1& 1& 1\\
-2& -1& -1 &-1 &-1 &-1\\
-1 & -1 & -1 & 0 & 0 & 0\\
-1 &  -1 & 0 & -1 & 0 & 0\\
-1 &  -1 & 0 &0 & -1 & 0\\
-1 &  -1 & 0 &0 & 0& -1 \end{array}\right]\]

\end{lemm}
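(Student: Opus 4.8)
The plan is to compute the action of $\hat\sigma$ on the basis $(L,E,E_1,\dots,E_4)$ column by column, just as in the proof of Proposition~\ref{Prop:Lifts}, and then use the fact that $\hat\sigma$ is an involution to get the remaining columns for free. First I would record that $(L,E,E_1,\dots,E_4)$ is indeed an orthogonal basis of $\Pic{X}$ with the stated self-intersections: this is standard for the blow-up of five points in (possibly infinitely near) general position, the only subtlety being the case where $p$ is an inflexion point, where $p_4$ is infinitely near to $p$; but then $E$ is the total transform of the fibre over $p$ and $E_4$ the last exceptional curve, and one still has $E^2=E_4^2=-1$, $E\cdot E_4=0$ after passing to total transforms, so the intersection form is unchanged. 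I would remark that the canonical divisor is $K_X=-3L+E+E_1+\dots+E_4$, which must be fixed by $\hat\sigma$ (as $\hat\sigma$ is an automorphism), giving a useful consistency check at the end.

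Next I would determine $\hat\sigma(L)$. By definition $\sigma_{p,Q}$ has degree $3$ (Lemma~\ref{Lem:CubicInvProp}), with a base-point of multiplicity $2$ at $p$ and simple base-points at $p_1,\dots,p_4$; hence the linear system of pull-backs of lines is $3L-2E-E_1-E_2-E_3-E_4$, which is $\hat\sigma(L)$. For $\hat\sigma(E)$: the exceptional divisor $E$ is sent to the strict transform of the curve contracted by $\sigma_{p,Q}$ onto $p$. Geometrically a general line $\ell$ through $p$ is preserved by $\sigma_{p,Q}$ and restricts to the involution fixing $(\ell\cap C)\setminus\{p\}$; the curve contracted to $p$ is the locus of points $x$ for which this involution (on the line $\overline{px}$) sends $x$ to $p$, i.e.\ the points $x\neq p$ on $C$ such that $\overline{px}$ is tangent to $C$ at $p$ --- reading off from the algebraic description in Section~\ref{Sec:SigmaP} (the conic $P_2+2yP_1=0$ restricted to the plane, here $n=2$, it is a conic), this is a conic through $p$ and through each $p_i$, so $\hat\sigma(E)=2L-E-E_1-E_2-E_3-E_4$. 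By the symmetry of the construction, $\hat\sigma$ permutes the roles of $p_1,\dots,p_4$ trivially: the curve contracted to $p_i$ is the line $\overline{pp_i}$ (tangent to $C$ at $p_i$), which has class $L-E-E_i$, so $\hat\sigma(E_i)=L-E-E_i$. This already fills in all six columns.

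Finally I would check consistency: $\hat\sigma^2=\mathrm{id}$ should hold on the nose, which one verifies by squaring the matrix (a short computation), and $\hat\sigma(K_X)=K_X$ with $K_X=-3L+E+\sum E_i$ should hold, which it does. I would also note the intersection form is preserved, as it must be for an automorphism. I expect the main obstacle to be not the linear algebra but the careful geometric identification of the contracted curves --- in particular getting the multiplicities right (that the ``tangent conic'' through $p$ really passes through each $p_i$ with multiplicity one, and that the contracted lines $\overline{pp_i}$ pass through $p$ with multiplicity one), and handling the inflexion-point case uniformly, where one of the four lines degenerates into the tangent direction at $p$ and $E_4$ lives inside the blow-up of $p$. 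Once those incidences are pinned down the matrix is forced, and the involution check confirms there is no error.
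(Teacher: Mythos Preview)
Your argument is correct and close in spirit to the paper's, but the mechanics for the $E$ and $E_i$ columns differ. You identify the contracted curves explicitly: the conic $P_2+2yP_1=0$ (through $p$ and all $p_i$) is sent to $p$, and each line $\overline{pp_i}$ is sent to $p_i$ (indeed, in dimension $2$ the quartic $(P_2)^2-4P_1P_3$ factors as the four lines through $p$ and the $p_i$). The paper instead exploits that $\sigma_{p,Q}$ preserves the pencil of lines through $p$: this gives $\hat\sigma(L-E)=L-E$ immediately (hence $\hat\sigma(E)$ from $\hat\sigma(L)$), and lifts to a conic bundle $X\to\p^1$ whose four singular fibres $E_i+(L-E-E_i)$ must each be set-wise invariant; an intersection with $\hat\sigma(L)$ then rules out $\hat\sigma(E_i)=E_i$. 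The conic-bundle route avoids checking multiplicities on the contracted curves and handles the inflexion case with no extra work; your route is more hands-on but equally valid once the incidences are confirmed.

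One small slip: your sentence ``the points $x\neq p$ on $C$ such that $\overline{px}$ is tangent to $C$ at $p$'' does not describe a conic (it describes the single tangent line at $p$); the correct geometric description of the locus contracted to $p$ is the harmonic-conjugate condition on each line through $p$. Since you then defer to the algebraic formula $P_2+2yP_1=0$ from Section~\ref{Sec:SigmaP}, the argument still goes through, but you should drop or fix that gloss.
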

\begin{proof}
Only the action of $\hat{\sigma}$ is not clear. By Lemma~\ref{Lem:CubicInvProp}, the map $\sigma_{p,Q}$ is a cubic involution, and its base-points are $p$ with multiplicity $2$, and $p_1,\dots,p_4$ with multiplicity $1$. This implies that $\hat{\sigma}(L)=3L-2E-E_1-E_2-E_3-E_4$. Because $\sigma_{p,Q}$ preserve the pencil of lines passing through $p$, we have $\hat{\sigma}(L-E)=L-E$. The lift of this pencil on $X$ gives a conic bundle $X\to \p^1$, with four singular fibres, each one being the union of $E_i$ and $L-E-E_i$ for $i=1,\dots,4$. This implies that the set $\{E_i,L-E-E_i\}$ is invariant for $i=1,\dots,4$. Computing the intersection with $L$ and $\hat{\sigma}(L)$ shows that  $\hat{\sigma}(E_i)=L-E-E_i$, for $i=1,\dots,4$. This achieves the proof.
\end{proof}

\begin{prop}\label{Prop:SigmaTau}
Let $C\subset \Pn$ be a smooth cubic curve, let $p\in C$ and let $\sigma_{p,Q}$ be the element defined in Definition $\ref{Def:InvPn}$. There exists an automorphism $\tau$ of $\p^2$,  acting via a translation of order $3$ on $C$, such that $\tau\sigma_{p,Q}$ is conjugate to an automorphism of a smooth projective rational surface $Y$, with dynamical degree $>1$.
\end{prop}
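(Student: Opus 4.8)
The plan is to find an explicit automorphism $\tau\in\Aut(\p^2)$ fixing the smooth cubic $C$ so that, after conjugating $\tau\sigma_{p,Q}$ by the blow-up of a suitable finite set of points, the base-points of the inverse map come back to the base-points of the map under iteration — the standard ``orbit data'' mechanism. Concretely, I would first record, using Lemma~\ref{Lem:ActionSigma} and the algebraic description of $\sigma_{p,Q}$, that the map $\sigma_{p,Q}$ contracts each line $L_i$ through $p$ and $p_i$ onto $p_i$ (these are the exceptional curves of the conic bundle $X\to\p^1$), while the base-point of $\sigma_{p,Q}^{-1}=\sigma_{p,Q}$ are again $p$ and $p_1,\dots,p_4$. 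So I want $\tau$ to permute the points $p_1,\dots,p_4$ cyclically, or to move them along a cycle of prescribed length in the surface obtained by blowing-up a longer orbit; since $\tau$ is required to act as a translation of order $3$ on $C$, the natural choice is to take $\tau$ translating $C$ by a $3$-torsion point so that the four tangency points $p_1,\dots,p_4$ get translated to four new points on $C$, and to iterate.

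The key steps, in order, are: (1) parametrize $C$ as an elliptic curve and express the condition ``the line through $p$ and $x$ is tangent to $C$ at $x$'' as an equation on $C$; this identifies $p_1,\dots,p_4$ as the solutions of a degree-$4$ relation, and lets me track where they go under the order-$3$ translation $\tau$. (2) Choose the orbit data: blow up $p$, then the orbit segments $p_i,\tau(p_i),\tau^2(p_i),\dots$ up to the point where $\tau^{n_i}(p_i)$ lands on a curve contracted by $\sigma_{p,Q}$, i.e. back onto one of the $p_j$'s; the fact that $\tau$ has order $3$ on $C$ and that the $p_i$ lie on $C$ forces these cycles to close up, giving a finite blow-up $Y\to\p^2$. (3) Check on $Y$ that $\tau\hat\sigma$ lifts to an honest automorphism: one must verify that no curve is contracted, which amounts to the base-point/contracted-curve bookkeeping from Lemma~\ref{Lem:ActionSigma} together with the non-collinearity and genericity of the points — here the hypothesis that $C$ is smooth and $p$ can be chosen not an inflexion point is used. (4) Compute the induced action on $\Pic(Y)$ as a product of the matrix of Lemma~\ref{Lem:ActionSigma} (suitably padded for the extra blow-ups) with the permutation matrix coming from $\tau$'s action on the orbit, and exhibit its spectral radius: show the characteristic polynomial has a root $>1$ (a Salem or at least a reciprocal polynomial with a root off the unit circle), which gives dynamical degree $>1$ and hence positive entropy.

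The main obstacle I expect is step (3) together with the precise choice of orbit lengths in step (2): one must arrange that the tangency points $p_1,\dots,p_4$, their $\tau$-translates, and the point $p$ are all distinct and satisfy the genericity needed for the lift to be a genuine automorphism (no unexpected collinearities, no orbit collision before the intended return, and the returning point must match the contracted curve correctly), and simultaneously that the resulting $\Pic$-isometry has spectral radius $>1$ rather than $=1$. This is a finite but delicate compatibility check: the translation being of order exactly $3$ is what makes the orbits close after few steps, keeping $\rk\Pic(Y)$ and the matrix size small enough to compute by hand, while still being large enough that the characteristic polynomial acquires a root outside the unit circle. Once the matrix is written down, verifying the spectral radius is a routine (if slightly tedious) computation of a characteristic polynomial of modest size, and the conjugation to an automorphism of $Y$ follows from the standard criterion that a birational self-map lifting with no contracted curves to a rational surface is an automorphism of that surface.
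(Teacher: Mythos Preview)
Your outline is in the right direction---blow up a $\tau$-invariant set, lift, and compute the spectral radius on $\Pic{Y}$---but you miss the one observation that dissolves precisely the obstacle you worry about in step~(3): the involution $\sigma_{p,Q}$ fixes the cubic $C$ \emph{pointwise}. Hence, once the five base-points $p,p_1,\dots,p_4$ are blown up, blowing up \emph{any further points lying on $C$} leaves the lift $\hat\sigma$ an automorphism, since those extra points are already fixed. There is no orbit-data return condition to verify, no collinearity or genericity check, no ``returning point must match the contracted curve'': the paper simply blows up the full $\tau$-orbit of $\{p,p_1,\dots,p_4\}$ (fifteen points, all on $C$, after putting $C$ in Hessian form so that the $3$-torsion translations of $C$ are realised by linear automorphisms of $\p^2$), so that $\hat\sigma$ and $\hat\tau$ are \emph{each} automorphisms of $Y$, and therefore so is $\hat\tau\hat\sigma$.

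There is also a concrete gap in your step~(2): you blow up $p$ but not its $\tau$-orbit. Since $\tau$ has no fixed point on $C$, the points $\tau(p),\tau^2(p)$ are distinct from $p$; if they are not blown up then $\tau$ does not lift, and $\tau\sigma_{p,Q}$ still contracts a curve onto the proper point $\tau(p)$, so the lifted map is not an automorphism. Blowing up the full orbit of $p$ as well (as the paper does) fixes this for free. After that, the computation is as you describe in step~(4): the paper writes the action of $\hat\tau\hat\sigma$ on a small invariant sublattice of $\Pic{Y}$ (distinguishing the case where some $\tau^j(p)$ coincides with some $p_i$ from the generic case) and reads off the dynamical degree as the largest real root of an explicit polynomial, e.g.\ $\tfrac{3+\sqrt 5}{2}$ in the generic case. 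Finally, the statement is for \emph{every} $p\in C$, including inflexion points; you should not restrict to non-inflexional $p$, and the pointwise-fixing argument handles both cases uniformly.
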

\begin{proof}
Denote as above by $p_1,\dots,p_4$ the base-points of $\sigma_{p,Q}$ of multiplicity $1$. Recall (Lemma~\ref{Lem:CubicInvProp}) that $p_1,p_2,p_3,p_4$ are the points of $C$ such that the tangent of $C$ at $p_i$ passes through $p$; if $p$ is an inflexion point, one of the points is the  point infinitely near to $p_1$ corresponding to the tangent direction of $C$.


We change coordinates on $\p^2$ and put the curve $C$ into its Hessian form, which is the equation 
$$x^3+y^3+z^3+\lambda xyz=0$$
for some $\lambda\in \C$ satisfying $\lambda^3\not=-27$. Let $H\subset \p^2$ be the group generated by
$$(x:y:z)\mapsto (y:z:x),$$
$$(x:y:z)\mapsto (x:\omega y:\omega^2z),$$
where $\omega$ is a $3$-rd rood of unity. One directly sees that $H$ is isomorphic to $(\mathbb{Z}/3\mathbb{Z})^2$ and preserves $C$. Moreover, the action of any of the non-trivial elements of $H$ on $C$ is fixed-point-free. We obtain thus an isomorphism of $H$ with the $3$-torsion of the group of translations $C\subset \Aut(C)$.

Let us denote by $\pi\colon Y\to \p^2$ the blow-up of the orbit of $\{p,p_1,\dots,p_4\}$ by $\tau$. If one of the $p_i$ is infinitely near to $p$, then its orbit consists of points infinitely near to the orbit of $p$. As before, we denote by $L\subset \Pic{Y}$ the pull-back of a general line of $\p^2$, by $E_i$ the divisor corresponding to the point $p_i$, and by $E$ the divisor corresponding to $p$, which is the total pull-back on $X$ of the curve contracted on $p$. 
The automorphism $\tau$ lifts to an automorphism $\hat{\tau}=\pi^{-1}\tau\pi\in \Aut(Y)$, which sends $E_i$ onto the divisor corresponding to $\tau(p_i)$, and sends $E$ onto the divisor corresponding to $\tau(p)$. 
The group $\Pic{X}$ is generated by $L$ and by $\{\hat{\tau}^i(E),\hat{\tau}^i(E_1),\dots,\hat{\tau}^i(E_4)\}_{i=0}^2$. The birational involution $\sigma_{p,Q}\in \Bir(\p^2)$ lifts to an automorphism of the surface obtained by blowing-up $p,p_1,\dots,p_4$; because this one fixes each of the other points blown-up (which belong to the curve $C$), it lifts to an automorphism $\hat{\sigma}$ of $Y$. This shows that $\tau\sigma_{p,Q}$ is conjugate by $\pi^{-1}$ to the automorphism $\hat{\tau}\hat{\sigma}$ of $Y$, and it suffices to show that its dynamical degree is $>1$ to conclude. This amounts to find a real eigenvalue of the action of $\hat{\tau}\hat{\sigma}$ on $\Pic{Y}\otimes_\mathbb{Z} \mathbb{R}$ which is bigger than $1$. The action of $\hat\sigma$ on $\Pic{Y}$ is given by Lemma~\ref{Lem:ActionSigma}, and the action of $\hat\tau$ fixes $L$ and permutes the exceptional divisors according to the action of $\tau$ on the points.

Because $\tau$ does not fix any point of $C$, the divisors $E,\hat\tau(E),\hat\tau^2(E)$ are distinct. This is also true for the divisors $E_i,\hat\tau(E_i),\hat\tau^2(E_i)$ for $i=1,\dots,4$. Note that $\tau(p_i)=p_j$ is also impossible, because it would imply that $\tau$ sends the line tangent to $C$ at $p_i$ onto the line tangent to $C$ at $p_j$, and thus $\tau$ would fix $p$. It remains to study two possible cases:

{\bf $1)$ There exists an $i\in \{1,\dots,4\}$ such that $\tau(p)=p_i$ or $\tau^2(p)=p_i$.}\\
Replacing $\tau$ by $\tau^2$ and renumbering the $p_i$ if needed, we can assume that $\tau(p)=p_1$. We see that $\tau(p_1)=\tau^2(p)$ is distinct from $\tau^i(p_j)$ for $j\ge 2$. The sub-$\mathbb{Z}$-module of $\Pic{Y}$ generated by $$L, E, \hat\tau(E)=E_1,\hat\tau(E_1),\sum_{i=2}^4E_i, \sum_{i=2}^4\hat\tau(E_i), \sum_{i=2}^4\hat\tau^2(E_i)$$
is invariant, and the action of $\hat{\tau}$ and $\hat{\sigma}$, relatively to this basis, are given by

\[\left[\begin{array}{rrrrrrr}
3 & 2 & 1& 0& 3& 0&0\\
-2& -1& -1 &0 &-3 &0 & 0\\
-1 & -1 & -1 & 0 & 0 & 0&0\\
0 & 0&0 & 1& 0 & 0&0\\
-1 &  -1 & 0 & 0 & -1 & 0&0\\
0 &  0 & 0 &0 & 0 & 1&0\\
0 &  0 & 0 &0 & 0& 0&1 \end{array}\right],
\left[\begin{array}{rrrrrrr}
1 & 0 & 0&  0& 0& 0&0\\
0&  0&  0  & 1 &0 &0 & 0\\
0 & 1 & 0  &0 & 0 & 0&0\\
0 & 0&  1 & 0& 0 & 0&0\\
0 &  0 & 0 & 0 & 0 & 0&1\\
0 &  0 & 0 &0 & 1 & 0&0\\
0 &  0 & 0 &0 & 0& 1&0 \end{array}\right].\]

The action of $\hat\tau\hat\sigma$ is thus the product of the two matrices, which is
\[\left[\begin{array}{rrrrrrr}
3&2&1&0&3&0&0\\ 0&0&0&1&0&0&0\\ -2&-1&-1&0&-3&0&0\\ -1&-1&-1&0&0&0&0\\ 0&0&0&0&0&0&1\\ -1&-1&0&0&-1&0&0\\ 0&0&0&0&0&1&0 \end{array}\right].\]
The characteristic polynomial is $x^7-2x^6+2x-1=(x-1)(x^6-x^5-x^4-x^3-x^2+1)$, whose real eigenvalues are $\lambda, 1,\lambda^{-1}$, where $\lambda\sim 1.946856$. This number is the dynamical degree of $\hat\tau\hat\sigma$ (and also of $\tau\sigma_{p,Q}$).

{\bf $2)$ For $i=1,\dots,4$,  $p_i\not\in\{\tau(p),\tau^2(p)\}$.}\\
In this case, the sub-$\mathbb{Z}$-module of $\Pic{Y}$ generated by 
$$L,2E+\sum_{i=1}^4 E_i, 2\hat\tau(E)+\sum_{i=1}^4\hat\tau(E_i), 2\hat\tau^2(E)+\sum_{i=1}^4\hat\tau^2(E_i)$$
is invariant, and the actions of $\hat{\sigma}$ and $\hat{\tau}$ are given by

\[\left[\begin{array}{rrrr}
3 & 8 & 0& 0\\
-1& -3& 0 &0 \\
0 & 0 & 1 & 0 \\
0 & 0&0 & 1  \end{array}\right],
\left[\begin{array}{rrrr}
1 & 0 & 0&  0\\
0&  0&  0  & 1 \\
0 & 1 & 0  &0 \\
0 & 0&  1 & 0 \end{array}\right].\]

The action of $\hat\tau\hat\sigma$ is thus the product of the two matrices, which is

\[\left[\begin{array}{rrrr}
3 & 8 & 0& 0\\
0& 0& 0 &1 \\
-1 & -3 & 0 & 0 \\
0 & 0&1 & 0  \end{array}\right].\]
The characteristic polynomial is $x^4-3x^3+3x-1=(x-1)(x+1)(x^2-3x+1)$, whose real eigenvalues are $-1,1,\frac{3\pm \sqrt{5}}{2}$. The dynamical degree is then $\frac{3+ \sqrt{5}}{2}\sim 2.618034$.
\end{proof}
\begin{coro}
Let $\varphi\in \Bir(\p^2)$ be a birational map of degree $3$. 

\begin{enumerate}
\item
If all base-points of $\varphi$ and $\varphi^{-1}$ are proper points of the plane, there exists an automorphism $\tau\in \Aut(\p^2)$ such that $\tau\varphi$ is conjugate to an automorphism of a smooth projective rational surface with dynamical degree $>1$.
\item
In the algebraic set of birational maps of $\p^2$ of degree $3$, the set of maps having this property is a dense subset with complement of codimension $1$.
\end{enumerate}
\end{coro}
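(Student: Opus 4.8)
The plan is to deduce both statements from Proposition~\ref{Prop:SigmaTau}, by showing that a cubic map satisfying the hypothesis of $(1)$ becomes, after composition on the left with an automorphism of $\p^2$, one of the involutions $\sigma_{p,Q}$ of Definition~\ref{Def:InvPn}. First recall that, by the genus formula, every $\varphi\in\Bir(\p^2)$ of degree $3$ has exactly five base-points (possibly infinitely near), with multiplicities $(2,1,1,1,1)$; write $q_0$ for the one of multiplicity $2$ and $q_1,\dots,q_4$ for those of multiplicity $1$. The homaloidal net of $\varphi$, i.e.\ the pull-back by $\varphi$ of the net of lines, is then the linear system of plane cubics having a double point at $q_0$ and passing through $q_1,\dots,q_4$, and it depends only on this configuration of base-points.

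The geometric heart of the argument is the following claim: \emph{if all base-points of $\varphi$ and of $\varphi^{-1}$ are proper points of $\p^2$, then there is a smooth cubic curve $C\subset\p^2$, with $q_0\in C$ not an inflexion point of $C$, such that $q_1,\dots,q_4$ are exactly the points of $C$ at which the tangent line passes through $q_0$.} I would establish the existence of \emph{some} cubic with this tangency behaviour by a dimension count: the cubics $C$ with $q_0\in C$ and such that $C$ meets the line $\ell_{0i}$ through $q_0$ and $q_i$ with multiplicity $\ge 2$ at $q_i$ form, inside $|\mathcal{O}_{\p^2}(3)|\cong\p^{9}$, the intersection of $1+2\cdot 4=9$ hyperplanes, hence a non-empty linear system. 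To get smoothness (and $q_0$ non-inflexion) I would study the incidence variety of pairs $(C,p)$ with $p\in C$, together with the morphism sending $(C,p)$ to $p$ and to the four tangency points of the tangents to $C$ through $p$; this is a morphism between irreducible varieties of dimension $10$, and the point is to show that it is dominant and that its image contains every configuration arising from a cubic map for which $\varphi$ and $\varphi^{-1}$ both have only proper base-points, the realising $C$ being smooth. That this last hypothesis is needed is clear from Lemma~\ref{Lem:CubicInvProp}$(3)$: if $q_0$ were an inflexion point of $C$, one of the tangency points would be infinitely near to $q_0$, so $\varphi=\alpha\sigma_{q_0,C}$ would have an infinitely near base-point; conversely, if $q_0$ is not an inflexion point then $\sigma_{q_0,C}$ and $\sigma_{q_0,C}^{-1}=\sigma_{q_0,C}$ have only proper base-points.

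Granting the claim, Lemma~\ref{Lem:CubicInvProp} shows that $\sigma:=\sigma_{q_0,C}$ is a cubic involution with base-points $q_0$ of multiplicity $2$ and $q_1,\dots,q_4$ of multiplicity $1$, exactly those of $\varphi$ and with the same multiplicities. Hence $\sigma$ and $\varphi$ have the same homaloidal net $\mathcal{N}$, so the pull-back by $\varphi\sigma$ of a general line equals $\sigma^{\ast}$ of a general member of $\mathcal{N}$, which — since $\mathcal{N}$ is also $\sigma^{\ast}$ of the net of lines and the action of $\hat\sigma$ on the Picard group is an involution (Lemma~\ref{Lem:ActionSigma}) — is again a line. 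Therefore $\varphi\sigma\in\Aut(\p^2)$, i.e.\ $\varphi=\alpha\,\sigma_{q_0,C}$ with $\alpha\in\Aut(\p^2)$. Now Proposition~\ref{Prop:SigmaTau} provides $\tau_0\in\Aut(\p^2)$ such that $\tau_0\,\sigma_{q_0,C}$ is conjugate to an automorphism of a smooth projective rational surface of dynamical degree $>1$; setting $\tau:=\tau_0\alpha^{-1}\in\Aut(\p^2)$ gives $\tau\varphi=\tau_0\alpha^{-1}\alpha\,\sigma_{q_0,C}=\tau_0\,\sigma_{q_0,C}$, which proves~$(1)$.

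For~$(2)$, I would argue that in the algebraic set of degree-$3$ elements of $\Bir(\p^2)$ the locus where $\varphi$ and $\varphi^{-1}$ both have only proper base-points is a dense open subset, whose complement — where some base-point of $\varphi$ or of $\varphi^{-1}$ is infinitely near — is a proper closed subset possessing a component of codimension $1$ (the generic degeneration being to let a single base-point become infinitely near to another, which costs one parameter) and no component of codimension $0$. Since~$(1)$ shows this open locus lies in the set of maps having the stated property, that set is dense with complement of codimension $1$. The main obstacle is the claim in the second paragraph: the dimension count yields a cubic with the prescribed tangency behaviour for free, but forcing it to be \emph{smooth}, and doing so for \emph{every} configuration allowed by the hypotheses rather than only the generic one, is the delicate step, and is exactly where the assumption that all base-points of both $\varphi$ and $\varphi^{-1}$ are proper must be used in full.
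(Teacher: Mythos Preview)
Your overall strategy is the same as the paper's: show that $\varphi$ differs from some $\sigma_{q_0,C}$ by a left automorphism, then invoke Proposition~\ref{Prop:SigmaTau}. The reduction ``same base-points with same multiplicities $\Rightarrow$ same homaloidal net $\Rightarrow$ differ by an automorphism'' is fine, and your deduction of part~(2) from~(1) by a dimension count matches the paper's.

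The genuine gap is exactly where you flag it: the existence of a \emph{smooth} cubic $C$ realising the given five points as $(q_0;q_1,\dots,q_4)$ with the tangency property. Your nine linear conditions in $\p^9$ do produce at least one cubic, but nothing prevents it from being reducible or singular, and your incidence-variety sketch would at best show the map $(C,p)\mapsto(p;p_1,\dots,p_4)$ is dominant, giving the claim only for \emph{generic} configurations, not for all of them. You also never extract from the hypothesis on $\varphi^{-1}$ the one geometric consequence that is actually needed, namely that no three of $q_0,q_1,\dots,q_4$ are collinear: if three of the $q_i$ lay on a line, its strict transform on the blow-up of the five points would be a $(-2)$-curve, forcing a base-point of $\varphi^{-1}$ to be infinitely near.

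The paper sidesteps the existence problem entirely. After normalising so that $q_0=(1{:}0{:}0)$ and deriving the ``no three collinear'' condition as above, it moves the four simple base-points to $(0{:}1{:}0),(0{:}0{:}1),(1{:}1{:}1),(a{:}b{:}c)$ and then simply \emph{writes down} an explicit cubic involution $\sigma$ with those five base-points and an explicit cubic $C$ fixed pointwise by it; one then checks directly that $C$ is smooth and that $\sigma=\sigma_{q_0,C}$. (Equivalently, and this is the conceptual reason it works for \emph{every} such configuration: five points in general position give a del Pezzo surface of degree~$4$, whose automorphism group contains a canonical involution, the Geiser-type involution coming from the $2$-torsion of the conic-bundle structure; its fixed curve descends to the smooth cubic.) So the missing step in your argument is not a dominance statement but an explicit construction, and that construction is available precisely because the hypothesis on $\varphi^{-1}$ forces general position.
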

\begin{proof}
$(1)$ We can replace $\varphi$ with $\alpha\varphi\beta$, where $\alpha,\beta\in \Aut(\p^2)$. In particular, we can assume that the base-point of multiplicity $2$ of both $\varphi$ and $\varphi^{-1}$ is $p=(1:0:0)$ and that a general line passing through this point is invariant by $\alpha$. The other base-points of $\varphi$ are $p_2,p_3,p_4,p_5$. Note that no $2$ of the $p_i$ are collinear with $p$, because otherwise the linear system of $\varphi$ (being cubics of degree $3$, with multiplicity $2$ at $p$ and multiplicity $1$ at $p_1,\dots,p_5$) would be reducible. If three of the $p_i$ are collinear, the line passing through these points would have self-intersection $-2$ on the blow-up of $p,p_1,\dots,p_5$, so the map $\varphi^{-1}$ would have a base-point being infinitely near (to~$p$). This implies that no $3$ of the points $p,p_1,\dots,p_5$ are collinear. Choosing the good automorphism $\beta$, we can then assume that 
\begin{center}$p=(1:0:0)$, $p_1=(0:1:0)$, $p_2=(0:0:1)$, $p_3=(1:1:1)$, $p_4=(a:b:c)$,\end{center}
for some $a,b,c\in \C^{*}$, no two being equal. We consider the birational cubic involution 
\begin{center}$\begin{array}{lrccccccl}
\sigma\colon (x:y:z) \dasharrow \big(\h{3}&-ayz(&\h{3}(c-b)x&\h{2}+\h{2}&(a-c)y&\h{2}+\h{2}&(b-a)z&\h{3})&\h{2}:\\ 
\h{3}& y(&\h{3}a(c-b)yz&\h{2}+\h{2}&b(a-c)xz&\h{2}+\h{2}&c(b-a)xy&\h{3})&\h{2}:\\
\h{3}& z(&\h{3}a(c-b)yz&\h{2}+\h{2}&b(a-c)xz&\h{2}+\h{2}&c(b-a)xy&\h{3})&\h{2}\big),\end{array}$\end{center}
and observe that its base-points are $p_1,\dots,p_4$, and $p$ with multiplicity $2$. It preserves a general line passing through $p=(1:0:0)$. Moreover it fixes pointwise the smooth cubic curve $C\subset \p^2$ of equation
\begin{center} $b(a-c)x^2z+c(b-a)x^2y+a(a-c)y^2z+a(b-a)yz^2+2a(c-b)xyz=0.$\end{center}
In particular, the map $\sigma$ is equal to the involution $\sigma_{p,Q}$ associated to $p\in C$, according to Definition~$\ref{Def:InvPn}$. Because $\sigma$ and $\varphi$ have the same linear system (same degree, same base-points with same multiplicities), then $\varphi$ is equal to $\sigma\gamma$ for some $\gamma\in \Aut(\p^2)$. Assertion $(1)$ follows then from Proposition~\ref{Prop:SigmaTau}. Note that the existence of $\sigma$ (which is uniquely determined by $p$, $p_1,\dots,p_4$) can also be seen more geometrically, by looking at the automorphism group of the del Pezzo surface of degree $4$ obtained by blowing-up $p,p_1,\dots,p_4$, (see \cite[Lemma~9.11]{bib:BlaLinearisation}).

It remains to prove Assertion $(2)$. Any cubic birational map $\varphi$ of $\p^2$ has one base-point of multiplicity $2$ and four base-points of multiplicity $1$. And two maps with the same base-points differ only by the post-composition with an automorphism. The set of cubic birational maps is then parametrised by one point of $\p^2$, a set of four other points, that are on $\p^2$ or infinitely near, and one automorphism of $\p^2$. The biggest dimension is when all points are on $\p^2$ and no $3$ are collinear, which is exactly the set where the map and its inverse have only proper base-points. It has the dimension of $(\p^2)^5\times \PGL(3,\C)$, which is $18$. The set of all other maps has only dimension $17$; it corresponds to the cases where $3$ points are collinear or one point is infinitely near.
\end{proof}

\begin{rema}
$i)$ By Proposition~$\ref{Prop:SigmaTau}$, the same result holds for map projectively equivalent to $\sigma_{p,Q}$ where $p$ is an inflexion point of a smooth cubic $Q$. These are the maps having $4$ proper base-points $p,p_1,p_2,p_3$ of multiplicity $2,1,1,1$ such that $p_1,p_2,p_3$ are collinear,  and a point $p_5$ infinitely near to $p$.

$ii)$ It also holds for other special cubics maps: some with two proper base-points  $($see \cite{bib:BedDil06}$)$, with one proper base-point $($see \cite{bib:BedKim10}$)$, or some maps of degree $3$ with exactly two proper base-points $($see \cite{bib:BedDil06} and \cite{bib:DeGr}$)$.

$iii)$ If a birational map $\varphi\in \Bir(\p^2)$ of degree $3$ has all its base-points which are proper but $3$ are collinear, then it is projectively equivalent to 
$$\psi_\lambda\colon(x:y:z)\dasharrow (yz(y-z+(\lambda-1)x):xy(z-\lambda y):xz(z-\lambda y))$$
for some $\lambda\in \C\setminus\{0,1\}$. In particular, $\varphi^{-1}$ has only $4$ proper base-points.
\end{rema}
\begin{ques}
Does there exists a $\lambda\in \C\setminus\{0,1\}$ such that for any $\tau\in \Aut(\p^2)$ the map $\tau\psi_\lambda$ is not conjugate to an automorphism of a projective surface $($with dynamical degree $>1)$?
\end{ques}
 \section{The example}\label{Sec:Example6}
 
\subsection{Actions on infinitely near points} Before proving Theorem~\ref{Thm:Exa6}, we need some general tools.

Let $X,Y$ be two smooth projective rational surfaces, and let $\psi\colon X\dasharrow Y$ be a birational map.
If $p$ is a point of $X$ or a point infinitely near, which is not a base-point of $\psi$, we define 
a point~$\psi^\bullet (p)$, which will also be a point of $Y$ or a point infinitely near. For this, take a minimal 
resolution 
$$
\xymatrix{& Z\ar[rd]^{\pi_2}\ar[ld]_{\pi_1}&\\
X\ar@{-->}[rr]_{\psi}&&Y,
}$$
where $\pi_1$, $\pi_2$ are sequences of blow-ups. Because $p$ is not a base-point of $\psi$, it corresponds, via $\pi_1$, to a point 
of $\mathrm{Z}$ or infinitely near. Using $\pi_2$, we view this point on $Y$, again maybe infinitely near, and call it 
$\psi^\bullet(p)$. Observe that this point is not a base-point of $\psi^{-1}$, and that $(\psi^{-1})^{\bullet}(\psi^\bullet(p))=p$.

\begin{rema}
If $p$ is not a base-point of $\phi\in \Bir(X)$ and $\phi(p)$ is not a base-point of $\psi\in \Bir(X)$, we have 
$(\psi\phi)^{\bullet}(p)=\psi^\bullet(\phi^\bullet(p))$. If $p$ is a general point of $X$, then $\phi^{\bullet}(p)=
\phi(p)\in X$.
\end{rema}

\begin{exem} If  $p=(1:0:0)\in \p^2$ and $\psi\in \Bir(\p^2)$ is the map $(x:y:z)\dasharrow (yz+x^2:xz:z^2),$
the point $\psi^{\bullet}(p)$ is not equal to $p=\psi(p)$, but is infinitely near to it.
\end{exem}

The following easy lemma will be important for the proof of Theorem~\ref{Thm:Exa6}.
\begin{lemm}
Let $\varphi\in \Bir(\p^2)$ be a birational map and let $p$ be a point of $\p^2$ $($or infinitely near$)$. If there exists  an integer $N\ge 0$ such that $p$ is a base-point of $\varphi^{-k}$ for any $k\geq N$ but is not a base-point of $\varphi^{k}$ for any $k\ge N$, then $\varphi$ is not conjugate to an automorphism of a smooth projective surface.\label{Lem:PtsBasesNotCOnj}
\end{lemm}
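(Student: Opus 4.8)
The plan is to argue by contradiction: suppose $\nu\colon \p^2\dasharrow X$ is a birational map to a smooth projective surface with $\psi:=\nu\varphi\nu^{-1}\in\Aut(X)$, and derive a contradiction from the existence of the point $p$ and the integer $N$. The key object to track is the orbit of base-points. First I would recall that for an automorphism $\psi\in\Aut(X)$, neither $\psi$ nor any power $\psi^k$ has base-points at all; every point of $X$ (proper or infinitely near) has a well-defined image under $(\psi^k)^\bullet=(\psi^\bullet)^k$. So I would transport the hypothesis through $\nu$: set $q=\nu^\bullet(p)$, which makes sense provided $p$ is not a base-point of $\nu$; if it is, one first replaces $\nu$ by a composition with enough blow-ups so that $p$ is resolved, which does not affect whether $\psi$ is an automorphism. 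Then the assumption becomes a statement purely about the behaviour of $q$ under the birational maps $\nu\varphi^k\nu^{-1}$ versus the automorphism $\psi^k$.

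The heart of the matter is a counting/finiteness argument on base-points. For a birational map $\beta\colon X\dasharrow Y$ between smooth projective rational surfaces, the number of base-points of $\beta$ (counted with infinitely near ones, i.e. the number of blow-ups in a minimal resolution) is finite, and it equals the number of base-points of $\beta^{-1}$; moreover this number is bounded by, say, $\deg$-type data — in any case it is a fixed finite number for each individual map. Now consider $\varphi^{-k}$ for $k\ge N$: by hypothesis $p$ is a base-point of each of these. Translating through $\nu$: the point $q$ is a base-point of $\nu\varphi^{-k}\nu^{-1}$ for every $k\ge N$. I would then compare $\nu\varphi^{-k}\nu^{-1}$ with $\psi^{-k}=(\nu\varphi\nu^{-1})^{-k}$. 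These are the same birational map of $X$! That is the crux: $\nu\varphi^{-k}\nu^{-1}=\psi^{-k}$ as elements of $\Bir(X)$, and $\psi^{-k}\in\Aut(X)$ has no base-points at all. This is already the contradiction — we do not even need the second half of the hypothesis.

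So the actual argument is shorter than the hypothesis suggests, and I should be careful about what is really needed. Let me restructure: the genuine subtlety is the transport of "being a base-point" through $\nu$. The correct statement is that $q$ is a base-point of $\nu\varphi^{-k}\nu^{-1}$ \emph{provided} $\nu^{-1}$ does not already resolve the base-point that $\varphi^{-k}$ creates, i.e. provided $p$ (equivalently $q$) is not a base-point of $\nu^{-1}$ in a way that absorbs it — which is exactly why one uses $\psi^\bullet$ and the hypothesis that $p$ is \emph{not} a base-point of $\varphi^k$ for $k\ge N$: this guarantees $(\varphi^k)^\bullet(p)$ is defined for all large $k$, so one can follow the point $p$ forward indefinitely, and since $\psi=(\varphi\text{ conjugated})$ is an automorphism, the forward orbit $(\psi^\bullet)^k(q)$, $k\ge 0$, is an infinite set of distinct points of $X$ (they must be distinct: if two coincided, $\psi$ would permute a finite orbit, but then going backward $\psi^{-k}$ would have no base-point at $q$, contradicting that $\varphi^{-k}$ does — one reads off the base-points of $\psi^{-k}$ from the points that "disappear" when following $\psi^\bullet$ forward). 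This is the standard Diller–Favre-type dichotomy: a base-point of $\varphi^{-k}$ that is never a base-point of any $\varphi^k$ corresponds to a point whose forward $\psi$-orbit is infinite while, being "invisible" to positive iterates, it forces the negative iterates $\psi^{-k}$ to contract something to it, contradicting $\psi^{-k}\in\Aut(X)$.

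\textbf{The main obstacle} I expect is bookkeeping of infinitely near points and the precise compatibility $(\psi\phi)^\bullet=\psi^\bullet\phi^\bullet$ together with the behaviour of $\nu^\bullet$ when $p$ \emph{is} a base-point of $\nu$ or $\nu^{-1}$; handling this cleanly requires first enlarging $X$ (blowing up further) so that $\nu,\nu^{-1}$ are morphisms near the relevant points, and checking that this reduction preserves the property "$\psi$ is an automorphism" — or rather, that we may as well assume it from the start via a model where everything in sight is resolved. Once that normalization is in place, the contradiction — that an automorphism has no base-points while the hypothesis forces $q$ to be a base-point of $\psi^{-k}$ — is immediate.
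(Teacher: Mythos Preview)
Your overall strategy is the same as the paper's: follow the forward orbit of $p$ (well-defined since $p$ is not a base-point of $\varphi^k$ for $k\ge N$), show it is infinite, transport through the conjugating map $\nu$, and derive a contradiction from the fact that an automorphism has no base-points. However, your proposed handling of the ``main obstacle'' contains a genuine gap.

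You suggest enlarging $X$ by further blow-ups so that $\nu$ and $\nu^{-1}$ become morphisms near the relevant points, and then ``checking that this reduction preserves the property $\psi$ is an automorphism''. It does not: if you blow up a point of $X$ not fixed by $\psi$ (and there is no reason the base-points of $\nu^{-1}$ should be $\psi$-fixed, nor that their $\psi$-orbits should be finite), then $\psi$ fails to lift to an automorphism of the new surface. So the normalisation you propose destroys precisely the hypothesis you need. Your distinctness argument for the forward orbit is also circular as written: you argue that a coincidence would mean ``$\psi^{-k}$ has no base-point at $q$, contradicting that $\varphi^{-k}$ does'', but $\psi^{-k}$ \emph{never} has base-points, and transporting base-points of $\varphi^{-k}$ to $X$ through $\nu$ is exactly the unresolved issue.

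The paper's fix is not to modify $X$ but to exploit finiteness against infiniteness. First, work entirely on $\p^2$: if $(\varphi^k)^\bullet(p)=(\varphi^l)^\bullet(p)$ for some $l>k\ge N$, then $(\varphi^{k-l})^\bullet(p)=p$, so $(\varphi^{(k-l)m})^\bullet(p)=p$ for all $m\ge 0$, contradicting that $p$ is a base-point of $\varphi^{-i}$ for large $i$. Hence $\{(\varphi^k)^\bullet(p)\}_{k\ge N}$ is infinite. Since $\nu$ has only finitely many base-points, choose $M\ge N$ so that $(\varphi^k)^\bullet(p)$ avoids them for all $k\ge M$; set $p_k=\nu^\bullet((\varphi^k)^\bullet(p))$, an infinite $\psi$-orbit on $X$. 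Extend it backward by the automorphism, then (enlarging $M$ if needed) use that $\nu^{-1}$ also has finitely many base-points to ensure $p_k$ avoids them for all $|k|\ge M$. Now $\varphi^{-2M}=\nu^{-1}\psi^{-2M}\nu$ is defined at $(\varphi^M)^\bullet(p)$, so $\varphi^{-M}$ is defined at $p$, the contradiction. In short: do not resolve $\nu$; instead go far enough along the infinite orbit to miss its finitely many base-points.
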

\begin{proof}
We prove first that $(\varphi^{k})^{\bullet}(p)$ and $(\varphi^{l})^{\bullet}(p)$ are two distinct points of $\p^2$ (or infinitely near), for any $k,l\ge N$ with $k\not=l$. Otherwise, assuming that $l> k$, the equality $(\varphi^{k})^{\bullet}(p)=(\varphi^{l})^{\bullet}(p)$  implies that $\varphi^{-l}$ is defined at $(\varphi^{k})^{\bullet}(p)$ (because it is defined at $(\varphi^{l})^{\bullet}(a)$), and that $(\varphi^{-l})^{\bullet}((\varphi^{k})^{\bullet}(p))=p$. In particular, $\varphi^{k-l}$ is defined at $p$, and $(\varphi^{k-l})^{\bullet}(p)=p$, which means that $(\varphi^{(k-l)m})^{\bullet}(p)=p$ for any $m\ge 0$. This is incompatible with the fact that $p$ is a base-point of $\varphi^{-i}$  for any $i\ge N$.

The set $\{(\varphi^{k})^{\bullet}(p)\}_{k=N}^{\infty}$ is thus an infinite set of points that belong to $\p^2$, as proper or infinitely near points. Suppose now that there exists a birational map $\alpha \colon \p^2\dasharrow S$, where $S$ is a smooth projective surface, that conjugates $\varphi$ to an automorphism $\hat{\varphi}=\alpha\varphi\alpha^{-1}$ of $S$. The map $\alpha$ having only a finite number of base-points, there exists $M\ge N$ such that no one of the points $\{(\varphi^{k})^{\bullet}(p)\}_{k=M}^{\infty}$ is a base-point of $\alpha$. Writing $p_k=\alpha^\bullet((\varphi^{k})^{\bullet}(p))$ for any $k\ge M$, we obtain a family of distinct points $\{p_k\}_{k=M}^\infty$ such that $\hat{\varphi}(p_k)=p_{k+1}$ for each $k\ge M$. Writing $p_{k-m}=\hat{\varphi}^{-m}(p_k)$ for any $m\ge 0$, we obtain an orbit $\{p_k\}_{k\in \mathbb{Z}}$ of the automorphism $\hat{\varphi}$, so that $p_k\not= p_l$ for $k\not=l$. Increasing maybe $M$, we can assume that $p_k$ is not a base-point of $\alpha^{-1}$ for any $k\ge M$ and any $k\le -M$.
This implies that  $(\varphi^{M})^{\bullet}(p)$ is not a base-point of the map $\varphi^{-2M}=\alpha^{-1} \hat{\varphi}^{2M} \alpha$; indeed, $\alpha^{\bullet}((\varphi^{M})^{\bullet}(p))=p_m$, and $(\hat{\varphi}^{-2m})^{\bullet}(p_m)=p_{-m}$, which is not a base-point of  $\alpha^{-1}$.  

We obtain a contradiction with the fact that $p$ is a base-point of $\varphi^M$ but not of $\varphi^{-M}$.
\end{proof}

 The section is devoted to the proof of Theorem~\ref{Thm:Exa6}. We will always denote by $\chi\in \Bir(\p^2)$ the birational map
$$\chi\colon (x:y:z)\dasharrow (xz^5+(yz^2+x^3)^2:yz^5+x^3z^3:z^6)$$
which restricts on the affine plane where $z=1$ to the automorphism
$$(x,y)\mapsto (x+(y+x^3)^2,y+x^3),$$ being the composition of 
 $(x,y)\mapsto (x+y^2,y)$ with $(x,y)\mapsto (x,y+x^3)$.
 
 \subsection{Basic description of the map $\chi$}
 The proof of Theorem~\ref{Thm:Exa6} will rely on the study of the base-points of $\chi$, and of its inverse, which is
 $$\chi^{-1}\colon (x:y:z)\mapsto (xz^5-y^2z^4:yz^5-(xz-y^2)^3:z^6).$$
 It will use two main properties: both $\chi$ and $\chi^{-1}$ have only one proper base-point, but the geometry of the base-points of the two maps are different (see below for more details). Note that many other examples can be constructed in the same way, the map $\chi$ is only the simplest one having the above properties.
 
 \subsection{Base-points of $\chi$}
 As all birational maps of $\p^2$ which contract only one curve, $\chi$ has only one proper base-point, namely $p_1=(0:1:0)$, and all its base-points are "in tower" (see \cite[Lemma 2.2]{bib:BlaCrelle}). This means that the $8$ base-points of $\chi$, that we denote by $p_1,\dots,p_8$, are such that $p_i$ is infinitely near to $p_{i-1}$ for $i=2,\dots,8$. We denote by $\pi\colon X\to \p^2$ the blow-up of the $8$ base-points, and write $\mathcal{C}\subset X$ the strict transform of the line $C\subset \p^2$ of equation $z=0$, which is the only curve of $\p^2$ contracted by $\chi$. We denote by $\mathcal{E}_i\subset X$ the strict transform of the curve obtained by blowing-up $p_i$. The intersection form on $X$ corresponds to the dual diagram of Figure~\ref{ConfigX} (this can be checked directly in local charts or by the decomposition of $\chi$ into two simple maps as above).
 
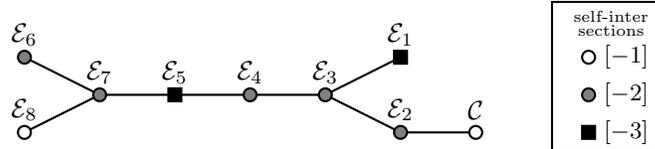
\begin{figure}[ht]\begin{pspicture}(-1,-0.3)(8,0.8)
\rput(-1,0.8){{$\mathcal{E}_6$}}
\rput(-1,-0.2){{$\mathcal{E}_8$}}
\rput(0,0.3){{$\mathcal{E}_7$}}
\rput(1,0.3){{$\mathcal{E}_5$}}
\rput(2,0.3){{$\mathcal{E}_4$}}
\rput(3,0.3){{$\mathcal{E}_3$}}
\rput(4,0.8){{$\mathcal{E}_1$}}
\rput(4,-0.2){{$\mathcal{E}_2$}}
\rput(5,-0.2){{$\mathcal{C}$}}
\psline(0,0)(3,0)
\psline(-1,0.5)(0,0)
\psline(-1,-0.5)(0,0)
\psline(3,0)(4,0.5)
\psline(3,0)(4,-0.5)
\psline(4,-0.5)(5,-0.5)
\pscircle[fillstyle=solid,fillcolor=gray](-1,0.5){0.1}
\pscircle[fillstyle=solid,fillcolor=white](-1,-0.5){0.1}
\pscircle[fillstyle=solid,fillcolor=gray](0,0){0.1}
\psframe[fillstyle=solid,fillcolor=black](0.9,-0.1)(1.1,0.1)
\pscircle[fillstyle=solid,fillcolor=gray](2,0){0.1}
\pscircle[fillstyle=solid,fillcolor=gray](3,0){0.1}
\pscircle[fillstyle=solid,fillcolor=gray](4,-0.5){0.1}
\pscircle[fillstyle=solid,fillcolor=white](5,-0.5){0.1}
\psframe[fillstyle=solid,fillcolor=black](3.9,0.4)(4.1,0.6)
\psframe(6,-0.75)(7.5,1.25)
\psframe[fillstyle=solid,fillcolor=black](6.4,-0.6)(6.6,-0.4)
\pscircle[fillstyle=solid,fillcolor=white](6.5,0.5){0.1}
\pscircle[fillstyle=solid,fillcolor=gray](6.5,0){0.1}
\rput(6.8,1.05){{\tiny self-inter}}
\rput(6.8,0.85){{\tiny sections}}
\rput(7,0.5){\small {$[-1]$}}
\rput(7,0){\small{$[-2]$}}
\rput(7,-0.5){\small{$[-3]$}}
\end{pspicture}\\
\caption{\small The configuration of the curves $\mathcal{E}_1,\dots,\mathcal{E}_8,\mathcal{C}$ on the surface~$X$\label{ConfigX}. Two curves are connected by an edge if their intersection is positive (and here equal to $1$). The self intersections correspond to the shape of the vertices.}\end{figure}

\bigskip

Let us write $\varphi=\tau\chi$, where $\tau$ is an automorphism of $\p^2$. 
Because $\pi$ is the blow-up of the base-points of $\chi$, which are also the base-points of $\varphi$, the map $\eta=\varphi\pi$ is a birational morphism $X\to \p^2$, which is the blow-up of the base-points of $\varphi^{-1}$. In fact, Figure~\ref{Resolutionvarphi} is the minimal resolution of $\varphi$.

\begin{figure}[ht]\centerline{
\xymatrix{&  X\ar[ld]_{\pi}\ar[rd]^{\eta}\\
\p^2\ar@{-->}[rr]_{\varphi}&&\p^2}
}
\caption{\small \label{Resolutionvarphi}The minimal resolution of indeterminacies of $\varphi$.}\end{figure}
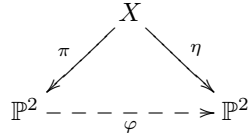

\bigskip

The line $C\subset\p^2$ being the only  curve of $\p^2$ contracted by $\varphi$, the morphism $\eta$ contracts $\mathcal{C}$, and the union of $7$ other irreducible curves, which are among the curves $\mathcal{E}_1,\dots,\mathcal{E}_8$. The configuration of  Figure~\ref{ConfigX} shows that $\eta$ contracts the curves 
$$\mathcal{C}, \mathcal{E}_2, \mathcal{E}_3, \mathcal{E}_4, \mathcal{E}_1, \mathcal{E}_5, \mathcal{E}_7,\mathcal{E}_6$$
following this order. 

We study now the blow-up $\eta\colon X\to \p^2$ in the same way as we did for $\pi$. We denote by $q_1,\dots,q_8$ the base-points of $\varphi^{-1}$ (or equivalently the points blown-up by~$\eta$), so that $q_1=(1:0:0)$, and $q_i$ is infinitely near to $q_{i-1}$ for $i=2,\dots,8$. We denote by $D\subset \p^2$ the line which is contracted by $\varphi^{-1}$ (which is the image by $\tau$ of the line of equation $z=0$), and write $\mathcal{D}\subset X$ the strict transform by $\eta^{-1}$ of the curve $D$. We then denote by $\mathcal{F}_i$ the strict transform of the curve obtained by blowing-up $q_i$. Because of the order of the curves contracted by $\eta$, we get equalities between $\mathcal{C},\mathcal{E}_1,\dots,\mathcal{E}_8$ and $\mathcal{D},\mathcal{F}_1,\dots,\mathcal{F}_8$, according to Figure~\ref{ConfigX2}.

\begin{figure}[ht]\begin{pspicture}(-1,-0.6)(7,0.9)
\rput(-1,0.8){\tiny {$\mathcal{E}_6\!=\!\mathcal{F}_1$}}
\rput(-1,-0.2){\tiny{$\mathcal{E}_8\!=\!\mathcal{D}$}}
\rput(0.2,-0.2){\tiny{$\mathcal{E}_7\!=\!\mathcal{F}_2$}}
\rput(1,0.3){\tiny{$\mathcal{E}_5\!=\!\mathcal{F}_3$}}
\rput(2,-0.2){\tiny{$\mathcal{E}_4\!=\!\mathcal{F}_5$}}
\rput(3,0.3){\tiny{$\mathcal{E}_3\!=\!\mathcal{F}_6$}}
\rput(4,0.8){\tiny{$\mathcal{E}_1\!=\!\mathcal{F}_4$}}
\rput(4,-0.7){\tiny{$\mathcal{E}_2\!=\!\mathcal{F}_7$}}
\rput(5,-0.2){\tiny{$\mathcal{C}\!=\!\mathcal{F}_8$}}
\psline(0,0)(3,0)
\psline(-1,0.5)(0,0)
\psline(-1,-0.5)(0,0)
\psline(3,0)(4,0.5)
\psline(3,0)(4,-0.5)
\psline(4,-0.5)(5,-0.5)
\pscircle[fillstyle=solid,fillcolor=gray](-1,0.5){0.1}
\pscircle[fillstyle=solid,fillcolor=white](-1,-0.5){0.1}
\pscircle[fillstyle=solid,fillcolor=gray](0,0){0.1}
\psframe[fillstyle=solid,fillcolor=black](0.9,-0.1)(1.1,0.1)
\pscircle[fillstyle=solid,fillcolor=gray](2,0){0.1}
\pscircle[fillstyle=solid,fillcolor=gray](3,0){0.1}
\pscircle[fillstyle=solid,fillcolor=gray](4,-0.5){0.1}
\pscircle[fillstyle=solid,fillcolor=white](5,-0.5){0.1}
\psframe[fillstyle=solid,fillcolor=black](3.9,0.4)(4.1,0.6)
\psframe(6,-0.75)(7.5,1.25)
\psframe[fillstyle=solid,fillcolor=black](6.4,-0.6)(6.6,-0.4)
\pscircle[fillstyle=solid,fillcolor=white](6.5,0.5){0.1}
\pscircle[fillstyle=solid,fillcolor=gray](6.5,0){0.1}
\rput(6.8,1.05){{\tiny self-inter}}
\rput(6.8,0.85){{\tiny sections}}
\rput(7,0.5){\small {$[-1]$}}
\rput(7,0){\small{$[-2]$}}
\rput(7,-0.5){\small{$[-3]$}}
\end{pspicture}\\
\caption{\small The configuration of the curves $\mathcal{E}_i,\mathcal{F}_i,\mathcal{C},\mathcal{D}$ on the surface~$X$\label{ConfigX2}.}\end{figure}
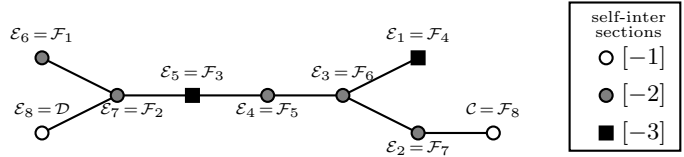

In particular, we see that the configuration of the points $p_1,\dots,p_8$ is not the same as the one of the points $q_1,\dots,q_8$. Saying that a point  $a$ is \emph{proximate to} a point $b$ if $a$ is infinitely near to $b$ and belongs to the strict transform of the curve obtained by blowing-up $b$, the configuration of the points $p_i$ and of the points $q_i$ are given in Figure~\ref{Configpts}.
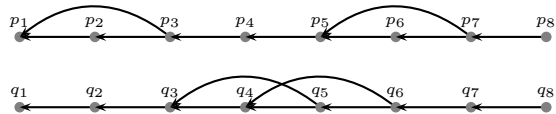
\begin{figure}[ht]\begin{pspicture}(0,-0.2)(9,0.7)
\rput(1,0.2){\tiny {$p_1$}}
\rput(2,0.2){\tiny {$p_2$}}
\rput(3,0.2){\tiny {$p_3$}}
\rput(4,0.2){\tiny {$p_4$}}
\rput(5,0.2){\tiny {$p_5$}}
\rput(6,0.2){\tiny {$p_6$}}
\rput(7,0.2){\tiny {$p_7$}}
\rput(8,0.2){\tiny {$p_8$}}
\pscircle[fillstyle=solid,fillcolor=gray,linecolor=gray](1,0){0.07}
\psline{->}(2,0)(1,0)
\pscircle[fillstyle=solid,fillcolor=gray,linecolor=gray](2,0){0.07}
\psline{->}(3,0)(2,0)
\parabola{<-}(1,0)(2,0.4)
\pscircle[fillstyle=solid,fillcolor=gray,linecolor=gray](3,0){0.07}
\psline{->}(4,0)(3,0)
\pscircle[fillstyle=solid,fillcolor=gray,linecolor=gray](4,0){0.07}
\psline{->}(5,0)(4,0)
\pscircle[fillstyle=solid,fillcolor=gray,linecolor=gray](5,0){0.07}
\psline{->}(6,0)(5,0)
\pscircle[fillstyle=solid,fillcolor=gray,linecolor=gray](6,0){0.07}
\psline{->}(7,0)(6,0)
\parabola{<-}(5,0)(6,0.4)
\pscircle[fillstyle=solid,fillcolor=gray,linecolor=gray](7,0){0.07}
\psline{->}(8,0)(7,0)
\pscircle[fillstyle=solid,fillcolor=gray,linecolor=gray](8,0){0.07}
\end{pspicture}\\
\begin{pspicture}(0,-0.2)(9,0.7)
\rput(1,0.2){\tiny {$q_1$}}
\rput(2,0.2){\tiny {$q_2$}}
\rput(3,0.2){\tiny {$q_3$}}
\rput(4,0.2){\tiny {$q_4$}}
\rput(5,0.2){\tiny {$q_5$}}
\rput(6,0.2){\tiny {$q_6$}}
\rput(7,0.2){\tiny {$q_7$}}
\rput(8,0.2){\tiny {$q_8$}}
\pscircle[fillstyle=solid,fillcolor=gray,linecolor=gray](1,0){0.07}
\psline{->}(2,0)(1,0)
\pscircle[fillstyle=solid,fillcolor=gray,linecolor=gray](2,0){0.07}
\psline{->}(3,0)(2,0)
\pscircle[fillstyle=solid,fillcolor=gray,linecolor=gray](3,0){0.07}
\parabola{<-}(3,0)(4,0.4)
\psline{->}(4,0)(3,0)
\pscircle[fillstyle=solid,fillcolor=gray,linecolor=gray](4,0){0.07}
\parabola{<-}(4,0)(5,0.4)
\psline{->}(5,0)(4,0)
\pscircle[fillstyle=solid,fillcolor=gray,linecolor=gray](5,0){0.07}
\psline{->}(6,0)(5,0)
\pscircle[fillstyle=solid,fillcolor=gray,linecolor=gray](6,0){0.07}
\psline{->}(7,0)(6,0)
\pscircle[fillstyle=solid,fillcolor=gray,linecolor=gray](7,0){0.07}
\psline{->}(8,0)(7,0)
\pscircle[fillstyle=solid,fillcolor=gray,linecolor=gray](8,0){0.07}
\end{pspicture}
\caption{\small The configuration of the points $p_1,\dots,p_8$ and of the points $q_1,\dots,q_8$. An arrow corresponds to the relation "is proximate to".\label{Configpts}}\end{figure}

\subsection{The proof of the theorem}
\begin{proof}[proof of Theorem~$\ref{Thm:Exa6}$]
We write as above $\varphi=\tau\chi$, where $\tau$ is an automorphism of $\p^2$, and recall that $p_1,\dots,p_8$ are the base-points of $\varphi$, and $q_1,\dots,q_8$ are the base-points of $\varphi^{-1}$. Our aim is to show that $p_3$ is a base-point of $\varphi^i$ and not of $\varphi^{-i}$, for any $i>0$. This will imply that $\varphi$ is not conjugate to an automorphism of a smooth projective rational surface, by Lemma~\ref{Lem:PtsBasesNotCOnj}.  

Denote by $k$ the lowest positive integer such that $p_1$ is a base-points of $\varphi^{-k}$; if no such integer exists, we write $k=\infty$.

For any integer $i$ such that $1\le i<  k$, the point $p_1$ is not a base-point of $\varphi^{-i}$, hence the maps $\varphi$ and $\varphi^{-i}$ have no common base-point. This implies that the set of base-points of the map $\varphi^{i+1}=\varphi\circ\varphi^{i}$ is the union of the base-points of $\varphi^{i}$ and of the points $(\varphi^{-i})^\bullet(p_j)$ for $j=1,\dots,8$. The map $\varphi^{-i}$ being defined at $p_1$, the point $(\varphi^{-i})^\bullet(p_j)$  is proximate to the point $(\varphi^{-i})^\bullet(p_k)$ if and only if $p_j$ is proximate to $p_k$.

Proceeding by induction on $i$, we obtain the following assertions:

\begin{enumerate}
\item
For any integer $i$ with $1\le i \le k$, the set of base-points of $\varphi^i$  is equal to
$$ \{(\varphi^{-m})^\bullet(p_j)\ |\ j=1,\dots,8, m=0,\dots,i-1\}.$$
\item
For any integer $l$ with $0\le -l<k$, the configuration of the points $\{(\varphi^{l})^\bullet(p_j)\}_{j=1}^8$ is given by 

\begin{pspicture}(0,-0.2)(9,0.7)
\psset{xunit=1.2}
\rput(1,-0.2){\tiny {$(\varphi^{l})^\bullet(p_1)$}}
\rput(2,0.2){\tiny {$(\varphi^{l})^\bullet(p_2)$}}
\rput(3,-0.2){\tiny {$(\varphi^{l})^\bullet(p_3)$}}
\rput(4,0.2){\tiny {$(\varphi^{l})^\bullet(p_4)$}}
\rput(5,-0.2){\tiny {$(\varphi^{l})^\bullet(p_5)$}}
\rput(6,0.2){\tiny {$(\varphi^{l})^\bullet(p_6)$}}
\rput(7,-0.2){\tiny {$(\varphi^{l})^\bullet(p_7)$}}
\rput(8,0.2){\tiny {$(\varphi^{l})^\bullet(p_8)$}}
\pscircle[fillstyle=solid,fillcolor=gray,linecolor=gray](1,0){0.07}
\psline{->}(2,0)(1,0)
\pscircle[fillstyle=solid,fillcolor=gray,linecolor=gray](2,0){0.07}
\psline{->}(3,0)(2,0)
\parabola{<-}(1,0)(2,0.4)
\pscircle[fillstyle=solid,fillcolor=gray,linecolor=gray](3,0){0.07}
\psline{->}(4,0)(3,0)
\pscircle[fillstyle=solid,fillcolor=gray,linecolor=gray](4,0){0.07}
\psline{->}(5,0)(4,0)
\pscircle[fillstyle=solid,fillcolor=gray,linecolor=gray](5,0){0.07}
\psline{->}(6,0)(5,0)
\pscircle[fillstyle=solid,fillcolor=gray,linecolor=gray](6,0){0.07}
\psline{->}(7,0)(6,0)
\parabola{<-}(5,0)(6,0.4)
\pscircle[fillstyle=solid,fillcolor=gray,linecolor=gray](7,0){0.07}
\psline{->}(8,0)(7,0)
\pscircle[fillstyle=solid,fillcolor=gray,linecolor=gray](8,0){0.07}
\end{pspicture}

\end{enumerate}
In particular, $p_3$ is a base-point of $\varphi^i$ for any $i$ satisfying $1\le i\le k$.

If $k=\infty$, this implies that $p_3$ is a base-point of $\varphi^{i}$ for any $i>0$, and by definition of $k$, the point $p_1$ is not a base-point of $\varphi^{-i}$ for any $i>0$, so neither is $p_3$. We can thus assume that $k$ is a positive number.

Observe that $q_1$ is not a base-point of $\varphi^i$ for $1\le i\le k-1$. Indeed, otherwise $q_1$ would be equal to $(\varphi^{-m})^\bullet(p_j)$ for some $m,j$ satisfying $0\le m\le k-2$ and $1\le j\le 8$. This would imply that $p_j$ is a base-point of $\varphi^{m+1}$, which is impossible. We see thus that $\varphi^{-1}$ has no common base-point with $\varphi^i$ for $1\le i\le k-1$. In particular, the set of common base-points of $\varphi^{-1}$ and $\varphi^k$ is equal to $$B=\{(\varphi^{-(k-1)})^{\bullet}(p_j)\}_{j=1}^8\cap \{q_j\}_{j=1}^8.$$
 
Because $p_1$ is a base-point of $\varphi^{-k}$ and not of $\varphi^{-(k-1)}$, the point $(\varphi^{-(k-1)})^{\bullet}(p_1)$, which is a base-point of $\varphi^k$, is also a base-point of $\varphi^{-1}$. In particular, the set $B$ is not empty. The configuration of the two sets of points $\{(\varphi^{-(k-1)})^{\bullet}(p_j)\}_{j=1}^8$ and $\{q_j\}_{j=1}^8$ implies that $q_1=(\varphi^{-(k-1)})^{\bullet}(p_1)$. Moreover, either $B=\{q_1\}$ or $B=\{q_1,q_2\}$. Indeed, the point $q_3$ is proximate to $q_1$ but not to $q_2$, whereas $(\varphi^{-(k-1)})^{\bullet}(p_3)$ is proximate to $(\varphi^{-(k-1)})^{\bullet}(p_1)$ and $(\varphi^{-(k-1)})^{\bullet}(p_2)$.

The point $(\varphi^{-(k-1)})^{\bullet}(p_3)$ is therefore a point infinitely near to $q_1$, which corresponds, on the blow-up $\eta\colon X\to \p^2$, to a point that belong, as proper or infinitely near point, to one of the curves $\mathcal{F}_1$ or $\mathcal{F}_2$, equal respectively to $\mathcal{F}_7$, $\mathcal{F}_6$ (see Figure~\ref{ConfigX2}). Applying $\varphi^{-1}$ to it corresponds to apply $\pi\eta^{-1}$, so $(\varphi^{-k})^{\bullet}(p_3)$ is a point that is infinitely near to $p_6$, and thus to $p_3$. Because $p_3$ is not a base-point of $\varphi^{-i}$ for $1\le i \le k$, the point $p_3$ is not a base-point of $\varphi^{-(k+i)}$ and $(\varphi^{-(k+i)})^{\bullet}(p_3)$ is infinitely near to $(\varphi^{-i})^{\bullet}(p_3)$. In particular, $(\varphi^{-2k})^{\bullet}(p_3)$ is infinitely near to $(\varphi^{-k})^{\bullet}(p_3)$, which is infinitely near to $p_3$. Continuing like this, we get the following assertion:
\begin{equation}\mbox{For any $i\ge 1$, the point $p_3$ is not a base-point of $\varphi^{-i}$.}\label{Assp3}\end{equation}

It remains to show that $p_3$ is a base-point of $\varphi^i$ for each $i\ge 1$ to get the result. To do this, we will need the following  assertion:
\begin{equation}\mbox{For any $i\ge 1$, the point $q_3$ is not a base-point of $\varphi^{i}$.}\label{Assq3}\end{equation}
Note that (\ref{Assq3}) could be proved in the same way as~(\ref{Assp3}), reversing the order of $\varphi$ and $\varphi^{-1}$. We quickly recall the way to deduce it. Note that $q_3$ is not a base-point of $\varphi^i$ for $1\le i\le k-1$, because $q_1$ is not a base-point of $\varphi^i$ (see above). Since $q_3$ does not belong to $B$, which is the set of common base-points of $\varphi^{-1}$ and $\varphi^k$, the point $q_3$ is not a base-point of $\varphi^k$. The point $q_3$ is infinitely near to $q_1=(\varphi^{-(k-1)})^{\bullet}(p_1)$, in the second neighbourhood. The point $(\varphi^{k-1})^{\bullet}(q_3)$ is thus infinitely near to $p_1$. On the blow-up $\pi\colon X\to \p^2$, the point $(\varphi^{k-1})^{\bullet}(q_3)$ corresponds thus to a point that belongs, as a proper or infinitely point, to $\mathcal{E}_1$ or  $\mathcal{E}_2$, equal respectively to  $\mathcal{F}_4$ and $\mathcal{F}_7$. The point $(\varphi^{k})^{\bullet}(q_3)$ is then a point infinitely near to $q_4$, and then to $q_3$. As before, the fact that $q_3$ is not a base-point of $\varphi^i$ for $i=1,\dots,k$ and that $(\varphi^{k})^{\bullet}(q_3)$ is infinitely near to $q_3$ implies that $q_3$ is not a base-point of $\varphi^i$ for any $i\ge 0$, proving Assertion~(\ref{Assq3}).

It remains to see that Assertion~(\ref{Assq3}) implies that $p_3$ is a base-point of $\varphi^i$ for any $i\ge 1$. For $i=1$, this is obvious. For $i>1$, we decompose $\varphi^i$ into $\varphi^{i-1}\circ \varphi$, wedecompose $\pi\colon X\to \p^2$ into $\pi=\pi_{12}\circ \pi_{38}$, where $\pi_{12}\colon Y\to \p^2$ is the blow-up of $p_1,p_2$ and $\pi_{38}\colon X\to Y$ is the blow-up of $p_3,\dots,p_8$, and do the same with $\eta$. This yields the following commutative diagram
$$\xymatrix{&& X\ar@/^{-1pc}/[ddrr]_{\eta}\ar@/^{1pc}/[ddll]^{\pi}\ar[ld]_{\pi_{38}}\ar[rd]^{\eta_{38}}\\
& Y\ar[ld]_{\pi_{12}} &&Z\ar[rd]^{\eta_{12}}\\
\p^2\ar@{-->}[rrrr]_{\varphi}&&&&\p^2\ar@{-->}[rr]_{\varphi^{i-1}}&& \p^2.}$$

Note that $\eta_{38}$ contracts $\mathcal{F}_8,\dots,\mathcal{F}_3$ onto the point $q_3\in X_2$, which is not a base-point of $\varphi^{i-1}\circ \eta_{12}$. Let us take the system of conics of $\p^2$ passing through $p_1,p_2,p_3$ and denote by $\Lambda$ the lift of this system on $Y$, which is a system of smooth curves passing through $q_3$ with movable tangents, having dimension $2$. The strict transform on $X$ of $\Lambda$ is a system of curves intersecting $\mathcal{E}_3$ at a general movable point. The map $\eta_{38}$ contracts the curves $\mathcal{C}, \mathcal{E}_2,\mathcal{E}_3,\mathcal{E}_4,\mathcal{E}_1,\mathcal{E}_5$. The curve $\mathcal{E}_3$ being contracted and being not the last one, the image of the system by $\eta_{38}$ passes through $q_3$ with a fixed tangent (corresponding to the point $q_4$). The point $q_3$ being not a base-point of $\varphi^{i-1}\circ \eta_{12}$, the image of the system $\Lambda \subset Y$ by $\varphi^{i-1}\circ \eta\circ (\pi_{38})^{-1}$ has a fixed tangent at the point $(\varphi^{i-1}\circ \eta_{12})(q_3)$. This shows that $p_3$ is a base-point of $\varphi^{i-1}\circ \eta\circ (\pi_{38})^{-1}$, and thus of $\varphi^i$.
\end{proof}

\end{document}